\documentclass[draft]{amsart}

\usepackage{hyperref}
\usepackage{mathtools}
\mathtoolsset{showonlyrefs}
\usepackage[foot]{amsaddr}
\usepackage{bm}

\theoremstyle{plain}
\newtheorem{theorem}{Theorem}
\newtheorem{lemma}{Lemma}

\theoremstyle{definition}
\newtheorem{definition}{Definition}

\theoremstyle{remark}
\newtheorem{remark}{Remark}

\newcommand\diff{d}
\newcommand{\nset}{\mathbb{N}}
\newcommand\rset{\mathbb{R}}
\DeclareMathOperator{\bary}{bar}
\DeclareMathOperator{\ave}{\mathbb{E}}

\begin{document}

\title[Fr{\'e}chet barycenters and a law of large numbers in~$\mathcal{P}(\rset)$]%
{Fr{\'e}chet Barycenters and a Law of Large Numbers\\
  for Measures on the Real Line$^*$}
\thanks{$^*$\,The research was carried out at the IITP RAS at the expense of the Russian Science Foundation (project № 14-50-00150).}
\author{Alexey Kroshnin$^{1,2}$}
\author{Andrei Sobolevski$^{1,3}$}
\address{$^1$\,Institute for Information Transmission Problems of RAS (Kharkevich Institute)}
\address{$^2$\,Moscow Institute of Physics and Technology}
\address{$^3$\,National Research University ``Higher School of Economics''}
\email{sobolevski@iitp.ru}
\date{December 2015}

\begin{abstract}
	Endow the space $\mathcal{P}(\rset)$ of probability measures on~$\rset$ with a transportation cost $J(\mu, \nu)$ generated by a translation-invariant convex cost function.
	For a probability distribution on~$\mathcal{P}(\rset)$ we formulate a notion of average with respect to this transportation cost, called here the \emph{Fr{\'e}chet barycenter}, prove a version of the law of large numbers for Fr{\'e}chet barycenters, and discuss the structure of~$\mathcal{P}(\rset)$ related to the transportation cost~$J$.
\end{abstract}

\maketitle

\section{Introduction}
\label{sec:introduction}

The law of large numbers is probably the oldest result in statistics:
already by Kepler's time, the sample arithmetic mean became universally accepted as estimator for a quantity whose measurements are subject to errors (see, e.g., \cite{Sheynin.O:2005}).  
About a century later, in Part~4 of his \textit{Ars Conjectandi}, published posthumously in 1713 \cite{Bernoulli.J:1713,Sheynin.O:2005}, Jacob Bernoulli gave a rigorous proof that for two given outcomes, the probability of either outcome can be determined by averaging the number of its occurrences over a large sample.

Note that Bernoulli's argument is based on embedding the two distinct outcomes $\{0, 1\}$ into the real line $\rset$ whose affine structure is then used to perform averaging.
To use a similar approach in non-affine spaces, such as collections of geometric shapes, Maurice Fr{\'e}chet introduced in his memoir \cite{Frechet.M:1948} a notion of averaging on a general metric space $(M, d)$: for a Borel measure~$\mu$ its \emph{Fr{\'e}chet mean} is the global minimum of $\int d^2(x, \cdot)\, \diff\mu(x)$.
This construction reduces to the conventional mean if the space $(M, d)$ is Euclidean and the measure $\mu$ has finite second moment, but in more complex situations Fr{\'e}chet means may fail to exist or to be unique.

In this paper we consider averaging in the space $\mathcal{P}(M)$ of measures over a metric space~$M$, using a transport optimization procedure to define a suitable concept of a ``typical element'' \cite[p.~224ff.]{Frechet.M:1948}, which extends the notion of Fr{\'e}chet mean.
For the first time a construction of this kind was introduced by M.~Agueh and G.~ Carlier in \cite{Agueh.M:2011}: a \emph{Wasserstein barycenter} of a family of measures on the Euclidean space~$\rset^d$ is defined as the Fr{\'e}chet mean using the $2$-Wasserstein distance $W_2$ on~$\mathcal{P}(\rset^d)$, which is given by minimization of the mean-square displacement (a precise definition is recalled on p.~\pageref{def:two_wasserstein}).
In \cite{Agueh.M:2011}, the authors establish existence, uniqueness, and regularity results for the Wasserstein barycenter and, when $d = 1$, provide an explicit formula for the Wasserstein barycenter in terms of quantile functions of the measures involved.

Here we limit ourselves to the case $d = 1$ but take a general transportation cost
\begin{equation}
	J(\mu, \nu) = \inf_{\substack{0 \le \gamma \in \mathcal{P}(\rset^2)\colon\\ \pi_\#^x\gamma = \mu, \pi_\#^y\gamma = \nu}} \int g(x - y) \, \diff\gamma(x, y) \ge 0,
\end{equation}
where~$g(\cdot) \ge 0$ is a strictly convex function that satisfies $g(0) = 0$.
Since $J(\mu, \nu) = 0$ iff $\mu = \nu$, this cost quanifiies separation between measures $\mu$ and~$\nu$ in $\mathcal{P}(\rset)$ but does not necessarily satisfy the triangle inequality; in \cite{Frechet.M:1948} such measures of separation (with an additional requirement of symmetry, $J(\mu, \nu) = J(\nu, \mu)$) are called ``\textit{\'ecarts}'' to distinguish them from \emph{distances}, for which the triangle inequality is satisfied.

Let a measure $\nu$ be fixed and $\bm\mu$ be a random element of~$\mathcal{P}(\rset)$ with distribution $P_\mu$.
We introduce a notion of Fr{\'e}chet typical element of~$P_\mu$ with respect to~$J(\cdot, \cdot)$, which we propose to call the \emph{Fr{\'e}chet barycenter} of~$P_\mu$.
It is defined as any measure~$\nu$ for which the expected cost
\begin{equation}
	\label{eq:34}
	\ave J(\bm\mu, \nu) = \int_{\mathcal{P}(\rset)} J(\mu, \nu)\, \diff P_\mu
\end{equation}
attains its minimum over~$\mathcal{P}(\rset)$.
Rigorous definitions of such a distribution and an integral are formulated in Section~\ref{sec:baryc-family-meas}.

Suppose $\ave J(\bm\mu, \cdot)$ is not identically equal to~$+\infty$ on~$\mathcal{P}(\rset)$. 
Under this assumption the strict convexity of~$g$ ensures that a Fr{\'e}chet barycenter~$\nu^*$ is unique, and 
the one-dimensional setting allows to provide it with an explicit expression.
Namely, the \emph{quantile function} of the distribution~$\nu^*$ is given by
\begin{equation}
	\psi\colon (0, 1) \ni x \mapsto \arg\min_{y\in\rset} \ave g(F^{-1}_{\bm\mu}(x) - y),
\end{equation}
where $F^{-1}_{\bm\mu}$ is the quantile function (i.e., the inverse of the cumulative distribution function) of the random measure~$\bm\mu$.
In particular, when $g(x, y) = (x - y)^2$ the function~$\psi$ reduces to the usual arithmetic average of quantile functions, which was shown in \cite{Agueh.M:2011} to give the Wasserstein barycenter on~$\rset$.

In other words, \emph{every quantile of the Fr{\'e}chet barycenter~$\nu^*$, defined with respect to a transportation cost on~$\mathcal{P}(\rset)$ generated by the cost function~$g(\cdot)$, is given by the typical value of the corresponding quantile, defined on~$\rset$ with respect to the function~$g(\cdot)$}.
We establish this result first for Fr{\'e}chet barycenters of finite samples (Theorem~\ref{thm:emp_bar}) and then for an arbitrary probability distribution~$P_\mu$ on~$\mathcal{P}(\rset)$ such that $\ave J(\bm\mu, \nu) < +\infty$ for at least one measure $\nu$ (Theorem~\ref{thm:prob_bar}).

These results are then used to obtain the following form of a law of large numbers: \emph{the Fr{\'e}chet barycenter of an independent sample of size~$n$ from the distribution~$P_\mu$ weakly converges as~$n\to\infty$ to the Fr{\'e}chet barycenter of~$P_\mu$ itself}.

To see this we first establish a similar statement for Fr{\'e}chet typical elements in~$\rset$ with respect to the function~$g$.
Let the distribution $P_X$ on~$\rset$ be such that the function $\xi(\cdot)= \ave g(\bm X - \cdot)$ is finite for all~$x\in \rset$.
Then the Fr{\'e}chet typical element of the i.i.d.\ sample $\bm X_1, \bm X_2, \dots, \bm X_n$ from the distribution~$P_X$ converges to~$x^* := \arg \min_{x\in\rset} \xi(x)$ as $n \to \infty$  (Theorem~\ref{thm:3}).
This implies the law of large numbers for random measures on~$\rset$, which is again proved ``quantile-wise'' (Theorem~\ref{thm:4}).

This law of large numbers is then given another formulation: \emph{the sequence of ``empirical'' Fr{\'e}chet barycenters $\bm\nu_n$ converges to~$\nu^*$ in the sense that $J(\bm\nu_n, \nu^*) \to 0$ almost surely} (Theorem~\ref{thm:strong_conv}).
This notion of convergence is shown to be somewhat stronger than the weak convergence of measures (Theorem~\ref{thm:strongweak}).

The results reported here should be compared with recent results of T.~Le Gouic and J.-M.~Loubes \cite{LeGouic.T:2015} (see also the earlier series of works \cite{Boissard.E:2015,LeGouic.T:2013,Bigot.J:2015}, where the law of large numbers based on Wasserstein barycenters in considered in the context of deformation models).
The paper \cite{LeGouic.T:2015} contains a general consistency result for Wasserstein barycenters when a sequence of probability distributions over the $2$-Wasserstein space $(\mathcal{P}(M), W_2)$ converges with respect to the ``derivate'' $2$-Wasserstein distance (a $2$-Wasserstein distence defined over $(\mathcal{P}(M), W_2)$ taken as a metric space itself).
It should be stressed that in~\cite{LeGouic.T:2015} the base space $M$ is assumed to be an arbitrary locally compact geodesic space.

Our results are comparable to the construction of~\cite{LeGouic.T:2015} in the special situation when the two settings agree, i.e., when $M = \rset$ and $g(x - y) = (x - y)^2$.
Note that we do not consider general convergence of measures, but only convergence of an empirical measure of an i.i.d.\ sample to the distribution from which it is drawn.
In this setting conditions of our Theorems~\ref{thm:prob_bar} and~\ref{thm:4} are somewhat easier to check then the condition of convergence under the ``derivate'' $2$-Wasserstein metric. 
On the other hand, we use the one-dimensional geometry of the base space in an essential way, which is not readily extendable to a general manifold.

The paper is organized as follows.
In Section~\ref{sec:basic-defin-notat} we introduce some standard definitions and notation and recall that when the cost function is defined on the real line and convex, the optimal transport plan is given by a monotone map defined solely by the marginal measures irrespective of the specific form of the cost function.
Then we define in Section~\ref{sec:gener-baryc-conv} the generalized barycenter of a finite set of measures in $\mathcal{P}(\rset)$ and provide it with an explicit representation.
In Section~\ref{sec:baryc-family-meas} this construction is extended to barycenters of continuous families of measures in~$\mathcal{P}(\rset)$.
Then the central result of this paper is proved in Section~\ref{sec:weak_convergence}: the convergence of empirical barycenters for an i.i.d.\ sequence of random measures to the ``barycenter expected value'' of the corresponding distribution.
In order to prove it, we establish first a version of the law of large numbers for a suitable ``nonlinear averaging'' (Theorem~\ref{thm:3}) which has some independent interest.
Section~\ref{sec:strong_convergence} establishes convergence with respect to the transportation cost~$J(\cdot, \cdot)$, a property which turns out to be more informative than just the weak convergence.
An Appendix contains proof of a technical measure-theoretic result used in Section~\ref{sec:baryc-family-meas}.

\section{Basic facts on mass transportation on the real line}
\label{sec:basic-defin-notat}

For a measurable space $X$ denote the space of probability measures on~$X$ by $\mathcal{P}(X)$.  
In particular, if the space $X$ is topological, we assume it to be endowed with the standard Borel $\sigma$-algebra $\mathcal{B}(X)$.  
For two measurable spaces $X$, $Y$, a measurable map $T\colon X \to Y$ induces a map $T_\#\colon \mathcal{P}(X)\to \mathcal{P}(Y)$ given by $T_\#\mu(A) := \mu(T^{-1}(A))$ for any measurable $A\subset Y$.
Recall that for any integrable function $f$
\begin{equation}
	\label{eq:1}
	\int_Y f(y)\, \diff (T_\#\mu) = \int_X f(T(x))\, \diff\mu.
\end{equation}

For two measures $\mu, \nu \in \mathcal{P}(X)$ define the set of \emph{transport plans} taking $\mu$ to~$\nu$ as
\begin{equation}
	\label{eq:2}
	\Pi(\mu, \nu) := \Bigl\{\gamma \in \mathcal{P}(X\times X)\colon \pi^x_\#\gamma = \mu,\   \pi^y_\#\gamma = \nu\Bigr\},
\end{equation}
where $\pi^x$ and~$\pi^y$ are projections of~$X\times X$ to the first and second factor respectively.  
Observe that $\Pi(\mu, \nu)$ is always nonempty because it contains the direct product measure~$\mu\times\nu$. 

Fix a measurable function $c\colon X\times X \to \rset$ and call it the \emph{cost function}.
The \emph{transportation cost} of a transport plan~$\gamma$ is defined as
\begin{equation}
	\label{eq:3}
	K(\gamma) = \int_{\rset\times\rset} c(x, y)\, \diff\gamma.
\end{equation}
The \emph{Monge--Kantorovich problem} for given $\mu, \nu \in \mathcal{P}(X)$ consists in minimizing the \emph{transportation cost} $K(\gamma)$ over all $\gamma \in \Pi(\mu, \nu)$.  
A transport plan $\gamma^*$ is called \emph{optimal} if $K$ attains its minimum over $\Pi(\mu, \nu)$ at~$\gamma^*$.
If moreover $\gamma^* = (\mathrm{id}\times T^*)_\#\mu$ for some measurable~$T^*$, then $T^*$ is called the \emph{optimal transport map}.
Observe that $T^*_\#\mu = \nu$ if $T^*$ exists.

Recall that a measure $\mu$ on~$\rset$ is characterized by its (left-continuous) \emph{cumulative distribution function} $F_\mu(x) := \mu((-\infty, x))$, and its (left-continuous) inverse $F^{-1}_\mu(y) := \inf\{x\colon F(x) \ge y\}$ for $0 < y < 1$ is called the \emph{quantile function} of the measure~$\mu$.
In this paper we consider $X = \rset$ and take $c(x, y) = g(x - y)$, where the function $g$ is strictly convex.
We assume that $g$ attains its minimum at $x = 0$ (unique because $g$ is strictly convex) and, without loss or generality, that $g(0) = 0$.
In this setting the Monge--Kantorovich problem has a well-known explicit solution, which we recall here.

\addtocounter{theorem}{-1}
\begin{theorem}
	\label{thm:1}
	Let the infimum of transportation cost $K(\gamma)$ on $\Pi(\mu, \nu)$ be finite.
	Then an optimal transport plan exists and has a uniquely defined form 
	\begin{equation}
		\label{eq:4}
		\gamma^* = (F^{-1}_\mu \times F^{-1}_\nu)_\#\left.\mathcal{L}\right|_{(0, 1)},
	\end{equation}
	where $\left.\mathcal{L}\right|_{(0, 1)}$ is the standard Lebesque measure on~$(0, 1)$.  
\end{theorem}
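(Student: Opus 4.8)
The plan is to prove Theorem~\ref{thm:1} by exhibiting the candidate plan in~\eqref{eq:4} and showing it is optimal via a cyclical-monotonicity argument that exploits the one-dimensional structure and the strict convexity of~$g$. First I would verify that $\gamma^*$ as defined actually belongs to $\Pi(\mu, \nu)$: since $F^{-1}_\mu$ and $F^{-1}_\nu$ push the Lebesgue measure on $(0,1)$ forward to $\mu$ and $\nu$ respectively (a standard fact about quantile functions, following from \eqref{eq:1}), the pushforward under the product map has the correct marginals. This realizes $\gamma^*$ as the \emph{monotone} (comonotone) coupling, in which the two coordinates are simultaneously nondecreasing functions of the same underlying uniform variable.

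Next I would establish optimality. The key analytic input is that for a strictly convex cost $g$, the function $(x, y) \mapsto g(x - y)$ satisfies the strict Monge condition: for $x_1 < x_2$ and $y_1 < y_2$ one has the strict submodularity inequality $g(x_1 - y_1) + g(x_2 - y_2) < g(x_1 - y_2) + g(x_2 - y_1)$. I would prove this inequality directly from strict convexity by writing both of the ``swapped'' arguments as convex combinations of the ``unswapped'' ones. Given this, any plan whose support contains a pair of points violating the comonotone ordering can be strictly improved by a local rearrangement, so no optimal plan can have such a pair; equivalently, the support of any optimal plan is a monotone (nondecreasing) set. Since $\gamma^*$ is precisely supported on a maximal monotone set and has the prescribed marginals, it is optimal. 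To make the ``local rearrangement'' rigorous at the level of measures rather than finitely many atoms, I would invoke the standard fact that the infimum in the Monge--Kantorovich problem is attained (which follows from tightness of $\Pi(\mu,\nu)$ and lower semicontinuity of $K$, given that the infimum is assumed finite), and then argue that an optimal plan must be supported on a $c$-cyclically monotone set; in one dimension with a strictly submodular cost this forces the support to be monotone.

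For uniqueness I would argue that the monotone rearrangement is the \emph{only} coupling supported on a monotone set with the given marginals. Here the strictness of the submodularity inequality is essential: it rules out any nontrivial mixing away from the comonotone ordering without strictly increasing the cost, so two distinct optimal plans cannot coexist. Concretely, if $\gamma_1$ and $\gamma_2$ were both optimal, then by convexity of $K$ so would be $\tfrac12(\gamma_1 + \gamma_2)$, and one checks that a plan whose support is not contained in a monotone set cannot be optimal; since the monotone coupling with fixed marginals is unique, this yields $\gamma_1 = \gamma_2 = \gamma^*$.

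The main obstacle I expect is the measure-theoretic rigor of the rearrangement argument, rather than any conceptual difficulty: it is geometrically transparent that a non-monotone coupling can be improved, but converting ``pick two offending points in the support and swap'' into a genuine construction of a competing plan with strictly smaller cost requires care with the supports and with the possibility that $g$ takes the value $+\infty$ or grows rapidly. A clean route is to bypass explicit swapping entirely and instead cite the general duality/cyclical-monotonicity theory of optimal transport (existence of an optimal plan supported on a $c$-cyclically monotone set), reducing the one-dimensional problem to verifying that strict submodularity of $g(x-y)$ characterizes the comonotone coupling as the unique such set. I would then need only check that the stated $\gamma^*$ is exactly this coupling, which is immediate from the definition of the quantile functions.
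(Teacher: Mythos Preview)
Your proposal is correct and follows essentially the same approach as the paper's own argument: both derive the strict submodularity inequality $g(x_1-y_1)+g(x_2-y_2)<g(x_1-y_2)+g(x_2-y_1)$ from strict convexity of~$g$ (the paper packages this as Lemma~\ref{lm:convex}) and use it to conclude that any optimal plan must be supported on a monotone set, hence must be the comonotone coupling~\eqref{eq:4}. The paper presents its proof explicitly as a sketch and leaves existence and uniqueness largely implicit, whereas you spell out the compactness/lower-semicontinuity argument for existence and the convexity-of-$K$ argument for uniqueness; these are welcome elaborations but not a different route.
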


We first prove the following lemma, which is used repeatedly in the sequel.
\begin{lemma}
	\label{lm:convex}
	For a convex function $g(\cdot)$, inequalities $x < x + \delta < y$ and $x < y - \delta < y$ imply that
	\begin{equation}
		\label{eq:32}
		g(x) + g(y) \ge g(x + \delta) + g(y - \delta),
	\end{equation}
	and when the function~$g$ is strictly convex, this inequality is strict.
\end{lemma}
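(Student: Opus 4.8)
The plan is to realize the two inner points $x+\delta$ and $y-\delta$ as convex combinations of the two outer points $x$ and $y$ with complementary weights, apply the definition of convexity to each, and sum the resulting inequalities. The key observation is that $(x+\delta) + (y-\delta) = x + y$, so the operation preserves the sum of the arguments while shrinking their spread; the shared weight parameter is exactly what lets the two convexity inequalities combine cleanly.

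Concretely, I would first set $t := \delta/(y-x)$. The hypotheses $x < x+\delta$ and $x+\delta < y$ give $0 < \delta < y-x$, so $t \in (0,1)$; in particular $x < y$. A direct computation then shows
\begin{equation}
	x + \delta = (1-t)\,x + t\,y, \qquad y - \delta = t\,x + (1-t)\,y.
\end{equation}

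Next I would invoke convexity of $g$ on each of these combinations:
\begin{align}
	g(x+\delta) &\le (1-t)\,g(x) + t\,g(y), \\
	g(y-\delta) &\le t\,g(x) + (1-t)\,g(y).
\end{align}
Adding these two inequalities, the coefficients of $g(x)$ and of $g(y)$ each sum to~$1$, which yields $g(x+\delta) + g(y-\delta) \le g(x) + g(y)$, as required.

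Finally, for the strict case I would note that $x \ne y$ and $t \in (0,1)$, so when $g$ is strictly convex each of the two displayed inequalities is strict, whence their sum is strict as well, giving $g(x) + g(y) > g(x+\delta) + g(y-\delta)$. There is no genuine obstacle here; the only point requiring a moment's care is checking that the weight $t$ lies strictly between $0$ and $1$, so that the combinations are true interior convex combinations and the strict inequalities apply. This follows immediately from the stated hypotheses.
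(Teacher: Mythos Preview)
Your proof is correct and follows essentially the same approach as the paper: you introduce the weight $t = \delta/(y-x) \in (0,1)$, express $x+\delta$ and $y-\delta$ as complementary convex combinations of $x$ and $y$, apply convexity to each, and sum. The paper's argument is identical (with $\lambda$ in place of~$t$), though you spell out the strict case slightly more explicitly.
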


\begin{proof}
	Indeed, if $\lambda = \delta/(y - x)$, then clearly $0 < \lambda < 1$ and
	\begin{displaymath}
		\begin{aligned}
			x + \delta &= (1 - \lambda) x + \lambda y, \\
			y - \delta &= \lambda x + (1 - \lambda) y.
		\end{aligned}
	\end{displaymath}
	Therefore convexity can be used to get
	\begin{displaymath}
		\begin{aligned}
			g(x + \delta) &\le (1 - \lambda) g(x) + \lambda g(y), \\
			g(y - \delta) &\le \lambda g(x) + (1 - \lambda) g(y),
		\end{aligned}
	\end{displaymath}
	which implies the statement.
	The case of strict convexity is treated similarly.
\end{proof}

\begin{proof}[Sketch of proof of Theorem~\protect\ref{thm:1}]
	Consider points $x_1$, $x_2$, $y_1$, and~$y_2$ in~$\rset$ such that $x_1 < x_2$ and $y_1 < y_2$.
	Then both $x_1 - y_1$ and~$x_2 - y_2$ lie strictly between $x_1 - y_2$ and $x_2 - y_1$, which in view of the strict convexity of~$g$ and of Lemma~\ref{lm:convex} implies that
	\begin{equation}
		\label{eq:5}
		g(x_1 - y_1) + g(x_2 - y_2) < g(x_1 - y_2) + g(x_2 - y_1).
	\end{equation}
	As $\inf K(\gamma)$ over~$\Pi(\mu, \nu)$ is finite, there cannot exist sets $A$, $B$ with $\gamma^*(A) > 0$, $\gamma^*(B) > 0$ such that $(x_1 - x_2) (y_1 - y_2) < 0$ for all $(x_1, y_1) \in A$ and $(x_2, y_2) \in B$.
	Thus, if an optimal map~$T(\cdot)$ exists, it must be nondecreasing $\mu$-a.e., and the support of~$\gamma^*$ must have the form $\{(x(t), y(t))\colon t\in (a, b)\}$ for some interval~$(a, b)$. 
	As $\gamma^*\in \Pi(\mu, \nu)$, such a transport plan is necessarily given by~\eqref{eq:4}.
\end{proof}

\begin{remark}
	Observe that if the optimal map $T(\cdot)$ exists (in particular, if $\mu$ does not have atoms), then $\mu$-a.e.
	\begin{equation}
		\label{eq:6}
		T(x) = F^{-1}_\nu(F_\mu(x)).
	\end{equation}
	Hence the optimal map does not depend on the specific form of the cost function~$g$ (apart from the fact that it is strictly convex).
\end{remark}

\begin{remark}
	If $g(\cdot)$ is convex but not strictly so, then formulas \eqref{eq:4} and~\eqref{eq:6} still give an optimal transport plan and an optimal map (provided the latter is well defined), but there may exist other optimal transport plans and maps.
\end{remark}

\section{Fr{\'e}chet barycenters for convex cost functions on~$\rset$}
\label{sec:gener-baryc-conv}

Define a functional on $\mathcal{P}(\rset)\times \mathcal{P}(\rset)$ by
\begin{equation}
	\label{eq:8}
	J(\mu, \nu) := \inf \{K(\gamma)\colon \gamma\in \Pi(\mu, \nu)\} = \int_0^1 g(F^{-1}_\mu(x) - F^{-1}_\nu(x))\, \diff x
\end{equation}
(where we have used the above theorem) or, if an optimal map~$T$ exists, by
\begin{equation}
	\label{eq:9}
	J(\mu, \nu) = \int_\rset g(x - T(x))\, \diff\mu.
\end{equation}

One can show that if $c(\cdot, \cdot)$ is a distance function on~$\rset$, then $J$ satisfies the triangle inequality on~$\mathcal{P}(\rset)$, i.e., it is itself a distance.
Another important particular case\label{def:two_wasserstein} is when $c(x, y) = (x - y)^2$: in this case $\sqrt{J(\mu, \nu)}$ gives a distance on~$\mathcal{P}(\rset)$, called the \emph{$2$-Wasserstein distance} and denoted $W_2(\mu, \nu)$.

\begin{definition}
	\label{def:barycenter}
	Consider a finite set $\mu_1, \mu_2, \dots, \mu_n$ of measures in~$\mathcal{P}(\rset)$ and the \emph{weights} $\lambda_1 > 0, \lambda_2 > 0, \dots, \lambda_n > 0$.
	The \emph{Fr{\'e}chet barycenter} $\bary (\mu_i, \lambda_i)_{1\le i\le n} \in \mathcal{P}(\rset)$ with respect to the cost function~$c(x, y) = g(x - y)$ is a measure that minimizes
	\begin{equation}
		\label{eq:10}
		\sum_{1\le i\le n} \lambda_i J(\mu_i, \nu)
	\end{equation}
	over $\nu \in \mathcal{P}(\rset)$.
\end{definition}

Without loss of generality we can assume the weights to be normalized so that $\sum_i \lambda_i = 1$.
In particular, the definition of Wasserstein barycenter given in \cite{Agueh.M:2011} is recovered when $J(\mu, \nu) = W_2^2(\mu, \nu)$.

\begin{theorem}
	\label{thm:emp_bar}
	Suppose $\sum_i \lambda_i J(\mu_i, \nu) < +\infty$ for some $\nu \in \mathcal{P(\rset)}$.
	Then the Fr{\'e}chet barycenter exists and is uniquely defined as $\nu^* = \psi_\# \left.\mathcal{L}\right|_{(0, 1)}$, where for $0 < x < 1$
	\begin{equation}
		\label{eq:11}
		\psi(x) := \arg\min_{y\in\rset} \sum_{1\le i\le n} \lambda_i g(F^{-1}_{\mu_i}(x) - y).
	\end{equation}
\end{theorem}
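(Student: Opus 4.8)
The plan is to reduce the minimization over measures to a pointwise minimization over quantile functions and then to verify that the resulting pointwise minimizer is itself a legitimate quantile function. Using the explicit representation \eqref{eq:8} for $J$, the objective \eqref{eq:10} becomes
\[
	\sum_{1\le i\le n} \lambda_i J(\mu_i, \nu) = \int_0^1 G_x\bigl(F^{-1}_\nu(x)\bigr)\, \diff x, \qquad G_x(y) := \sum_{1\le i\le n} \lambda_i\, g\bigl(F^{-1}_{\mu_i}(x) - y\bigr),
\]
so that minimizing over $\nu \in \mathcal{P}(\rset)$ amounts to minimizing over all nondecreasing left-continuous functions $q = F^{-1}_\nu$ on $(0,1)$. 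First I would fix $x \in (0,1)$ and study $G_x$. Since $g$ is strictly convex with minimum at $0$, a short convexity estimate shows $g(t)\to+\infty$ as $t\to\pm\infty$, so each summand $y\mapsto g\bigl(F^{-1}_{\mu_i}(x)-y\bigr)$ is coercive (the values $F^{-1}_{\mu_i}(x)$ being finite for $0<x<1$). Hence $G_x$ is finite, continuous, strictly convex and coercive, and it attains its minimum at a unique point, which is exactly $\psi(x)$ in \eqref{eq:11}.

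The key step is to show that $\psi$ is nondecreasing, so that it is (a.e.) the quantile function of the measure $\nu^* := \psi_\# \mathcal{L}|_{(0,1)}$. Here I would argue by contradiction: suppose $x_1 < x_2$ but $y_1 := \psi(x_1) > \psi(x_2) =: y_2$. Writing $a_i := F^{-1}_{\mu_i}(x_1) \le F^{-1}_{\mu_i}(x_2) =: b_i$ and combining the optimality of $y_1$ for $G_{x_1}$ with that of $y_2$ for $G_{x_2}$ gives, after adding the two inequalities,
\[
	\sum_i \lambda_i \bigl[g(a_i - y_1) + g(b_i - y_2)\bigr] \le \sum_i \lambda_i \bigl[g(a_i - y_2) + g(b_i - y_1)\bigr].
\]
On the other hand, the four arguments in the $i$th term share the same sum, and with $\delta := y_1 - y_2 > 0$ one has $a_i - y_2 = (a_i - y_1) + \delta$ and $b_i - y_1 = (b_i - y_2) - \delta$; thus for every index with $a_i < b_i$ the hypotheses of Lemma~\ref{lm:convex} hold and strict convexity yields $g(a_i - y_1) + g(b_i - y_2) > g(a_i - y_2) + g(b_i - y_1)$, while indices with $a_i = b_i$ contribute equalities. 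Since $\psi(x_1)\ne\psi(x_2)$ forces at least one index with $a_i < b_i$ (otherwise $G_{x_1}=G_{x_2}$ and their unique minimizers would coincide), summing against the positive weights $\lambda_i$ produces the strict inequality reverse to the displayed one — a contradiction. I expect this monotonicity to be the main obstacle, as it is precisely where the one-dimensional convex geometry encoded in Lemma~\ref{lm:convex} enters.

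It then remains to assemble optimality and uniqueness. Since $\psi$ is nondecreasing it is Borel measurable and coincides almost everywhere with $F^{-1}_{\nu^*}$, so by \eqref{eq:8} one has $\sum_i \lambda_i J(\mu_i, \nu^*) = \int_0^1 G_x\bigl(\psi(x)\bigr)\,\diff x$. For any competitor $\nu$ with quantile function $q$, the pointwise bound $G_x\bigl(q(x)\bigr) \ge G_x\bigl(\psi(x)\bigr)$ (legitimate since each $g\ge 0$) integrates to $\sum_i \lambda_i J(\mu_i, \nu) \ge \sum_i \lambda_i J(\mu_i, \nu^*)$; taking $\nu$ to be the measure furnished by the hypothesis shows in particular that $\sum_i \lambda_i J(\mu_i, \nu^*) < +\infty$, so $\nu^*$ is a Fr{\'e}chet barycenter. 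Finally, if $\nu$ is any minimizer then $\int_0^1 \bigl[G_x(q(x)) - G_x(\psi(x))\bigr]\,\diff x = 0$ with a nonnegative integrand, which by uniqueness of the minimizer of the strictly convex $G_x$ forces $q(x) = \psi(x)$ for almost every $x$; hence $\nu = \nu^*$ and the Fr{\'e}chet barycenter is unique.
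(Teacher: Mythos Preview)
Your proof is correct and follows essentially the same route as the paper: reduce to the quantile representation~\eqref{eq:8}, show $\psi$ is well defined and nondecreasing via Lemma~\ref{lm:convex}, then integrate the pointwise inequality to obtain optimality and uniqueness. The only cosmetic difference is in the monotonicity step: the paper places the \emph{strict} inequality on the optimality side (using that $\psi(x_1)\ne\psi(x_2)$ are unique minimizers) and the non-strict one on Lemma~\ref{lm:convex}, whereas you do the reverse and therefore need the extra observation that some $a_i<b_i$; both versions work.
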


\begin{proof}
	Observe first that $J \ge 0$ and therefore the condition of the theorem ensures that $\inf \sum_i \lambda_i J(\mu_i, \nu)$ over $\nu\in \mathcal{P}(\rset)$ is finite.

	Now fix some $n$-tuple $a = (a_1, a_2, \dots, a_n) \in \rset^n$ and consider the function
	\begin{equation}
		\label{eq:12}
		f_a(y) := \sum_{1\le i\le n} \lambda_i g(a_i - y).
	\end{equation}
	The assumptions on~$g$ imply that for all~$a$ this function is also strictly convex and attains a unique minimum on~$\rset$, so the function $\psi$ introduced in~\eqref{eq:11} is well-defined.

	Let us show that $\psi$ is nondecreasing.
	Indeed, assume on the contrary that  $\psi(x_1) > \psi(x_2)$ for some $0 < x_1 < x_2 < 1$.
	Then convexity of~$g$, monotonicity of~$F^{-1}_{\mu_i}(\cdot)$, and Lemma~\ref{lm:convex} imply that
	\begin{multline}
		\label{eq:13}
		g(F^{-1}_{\mu_i}(x_1) - \psi(x_2)) + g(F^{-1}_{\mu_i}(x_2) - \psi(x_1)) \\
		\le g(F^{-1}_{\mu_i}(x_1) - \psi(x_1)) + g(F^{-1}_{\mu_i}(x_2) - \psi(x_2)),
	\end{multline}
	for all~$1\le i\le n$, where $y_1 = \psi(x_2)$ and $y_2 = \psi(x_1)$ (the inequality will be nonstrict if $F^{-1}_{\mu_i}(x_1) = F^{-1}_{\mu_i}(x_2)$).
	Multiplying each of these inequalities by~$\lambda_i$ and summing over~$i$, we get
	\begin{multline}
		\label{eq:14}
		\sum_{1\le i\le n} \lambda_i \bigl[g(F^{-1}_{\mu_i}(x_1) - \psi(x_2))
		+ g(F^{-1}_{\mu_i}(x_2) - \psi(x_1))\bigr] \\
		\le \sum_{1\le i\le n} \lambda_i \bigl[g(F^{-1}_{\mu_i}(x_1) - \psi(x_1))
		+ g(F^{-1}_{\mu_i}(x_2) - \psi(x_2))\bigr].
	\end{multline}
	On the other hand, $\psi(x_1)$ and $\psi(x_2)$ are the unique minima of the right-hand side of~\eqref{eq:11} for the respective values $x_1$ and~$x_2$.
	Thus
	\begin{equation}
		\label{eq:15}
		\sum_{1\le i\le n} \lambda_i g(F^{-1}_{\mu_i}(x_1) - \psi(x_2))
		> \sum_{1\le i\le n} \lambda_i g(F^{-1}_{\mu_i}(x_1) - \psi(x_1))
	\end{equation}
	and
	\begin{equation}
		\label{eq:16}
		\sum_{1\le i\le n} \lambda_i g(F^{-1}_{\mu_i}(x_2) - \psi(x_1))
		> \sum_{1\le i\le n} \lambda_i g(F^{-1}_{\mu_i}(x_2) - \psi(x_2)).
	\end{equation}
	Adding the latter two inequalitites term by term, we obtain a contradiction with~\eqref{eq:14}, which proves that $\psi$ is nondecreasing.

	Define now $F(x) := \inf\{y\in (0, 1)\colon \psi(y) \ge x\}$ whenever the set in the r.h.s.\ is non-empty, and $F(x) = 1$ if $\psi(y) < x$ for all $0 < y < 1$.
	Then $F(-\infty) = 0$, $F(+\infty) = 1$ and $F$ is left-continuous, i.e., there exists a measure $\nu^* \in \mathcal{P}(\rset)$ such that $\nu^*((-\infty, x)) = F(x)$ for all~$x\in\rset$.
	Note that $\psi(\cdot)$ as defined in~\eqref{eq:11} is left-continuous because all $F^{-1}_{\mu_i}(\cdot)$ are, so $F^{-1}(x) = \psi(x)$ on~$(0, 1)$.

	We now check that $\nu^*$ is a Fr{\'e}chet barycenter.
	For any $\nu \in \mathcal{P}(\rset)$,
	\begin{multline}
		\label{eq:17}
		\sum_{1\le i\le n} \lambda_i J(\mu_i, \nu)
		= \sum_{1\le i\le n} \lambda_i\int_0^1 g(F^{-1}_{\mu_i} - F^{-1}_\nu)\, \diff x \\
		= \int_0^1 \sum_{1\le i\le n} \lambda_i g(F^{-1}_{\mu_i} - F^{-1}_\nu)\, \diff x
		\ge \int_0^1 \sum_{1\le i\le n} \lambda_i g(F^{-1}_{\mu_i} - \psi)\, \diff x \\
		= \sum_{1\le i\le n} \lambda_i \int_0^1 g(F^{-1}_{\mu_i} - F^{-1}_{\nu^*})\, \diff x
		= \sum_{1\le i\le n}\lambda_i J(\mu_i, \nu^*).
	\end{multline}
	The latter quantity is finite, and the strict convexity of~$g$ ensures that the inequality of the middle line is strict unless $\nu = \nu^*$.
	Thus $\nu^* = \bary(\mu_i, \lambda_i)_{1\le i\le n}$.
\end{proof}

\begin{remark}
	If $g(\cdot)$ is smooth, then the function~$\psi$ is determined from the equation
	\begin{equation}
		\label{eq:18}
		\sum_i \lambda_i g'(F^{-1}_{\mu_i} - \psi) = 0.
	\end{equation}
	In particular when $g(x) = x^2$ we recover the formula $\psi(x) = \sum_i \lambda_i F^{-1}_{\mu_i}(x) $ for the Wasserstein barycenter \cite{Agueh.M:2011}.
\end{remark}

\begin{remark}
	If $g$ is convex but not strictly so, then $\arg\min$ in~\eqref{eq:11} may be attained on an interval rather than at a single point.
	Define $\psi(x)$ to be the left endpoint of this interval; we will then still obtain a Fr{\'e}chet barycenter as the measure $\nu^*$ for which $\nu^*((-\infty, x)) = \psi^{-1}(x)$, but this barycenter will not necessarily be unique.
\end{remark}

Now we show that the Fr{\'e}chet barycenter of fixed number of arguments~$m$ is weakly continuous with respect to weights $\lambda_1, \dots, \lambda_m$ and weak convergence of measures.

\begin{lemma}
	\label{lem:weak_conv}
	Take a sequence of measures $\{\nu_n\}_{n\in \nset}$ and a measure $\nu^*$ such that $F^{-1}_{\nu_n}(y)\to F^{-1}_{\nu^*}(y)$ for all $y\in M$, where $M$ is a dense subset of~$(0, 1)$. Then $\nu_n\rightharpoonup \nu^*$.
\end{lemma}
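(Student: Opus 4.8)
The plan is to leverage the standard equivalence between convergence of quantile functions and weak convergence. The hypothesis gives convergence only on the dense set $M$, so the first task is to upgrade it to convergence at \emph{every} continuity point of the limiting quantile function $F^{-1}_{\nu^*}$, using monotonicity; the second task is to deduce weak convergence from a.e.\ convergence of quantile functions via a Skorokhod-type coupling.

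First I would fix a point $y_0 \in (0, 1)$ at which $F^{-1}_{\nu^*}$ is continuous and show $F^{-1}_{\nu_n}(y_0) \to F^{-1}_{\nu^*}(y_0)$. Since $M$ is dense, I can choose $y', y'' \in M$ with $y' < y_0 < y''$. Each quantile function is nondecreasing, so
\[
F^{-1}_{\nu_n}(y') \le F^{-1}_{\nu_n}(y_0) \le F^{-1}_{\nu_n}(y'').
\]
Letting $n \to \infty$ and applying the hypothesis at the points $y', y'' \in M$ gives
\[
F^{-1}_{\nu^*}(y') \le \liminf_{n} F^{-1}_{\nu_n}(y_0) \le \limsup_{n} F^{-1}_{\nu_n}(y_0) \le F^{-1}_{\nu^*}(y'').
\]
Now letting $y' \uparrow y_0$ and $y'' \downarrow y_0$ along $M$ and using continuity of $F^{-1}_{\nu^*}$ at $y_0$ squeezes the two outer terms to the common value $F^{-1}_{\nu^*}(y_0)$, which forces the limit. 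Because $F^{-1}_{\nu^*}$ is nondecreasing it has at most countably many discontinuities, so this pointwise convergence in fact holds for Lebesgue-a.e.\ $y_0 \in (0, 1)$.

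Next I would pass from a.e.\ convergence of quantile functions to $\nu_n \rightharpoonup \nu^*$ by coupling. Take a single random variable $\bm U$ with law $\left.\mathcal{L}\right|_{(0,1)}$ and set $\bm X_n := F^{-1}_{\nu_n}(\bm U)$ and $\bm X^* := F^{-1}_{\nu^*}(\bm U)$; by the pushforward formula these have laws $\nu_n$ and $\nu^*$ respectively. The set of $y$ on which convergence fails is Lebesgue-null, so $\bm U$ avoids it almost surely, whence $\bm X_n \to \bm X^*$ almost surely. Almost sure convergence implies convergence in distribution, which is precisely $\nu_n \rightharpoonup \nu^*$.

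The only genuinely delicate point is the first step: the hypothesis supplies convergence on $M$, which need not contain the continuity points of $F^{-1}_{\nu^*}$, so the hypothesis cannot be invoked directly at $y_0$. The monotonicity sandwich together with the density of $M$ is exactly what bridges this gap, while the at-most-countable jump set of $F^{-1}_{\nu^*}$ is harmless since it is Lebesgue-null. Everything after that is the routine coupling argument.
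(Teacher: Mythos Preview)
Your argument is correct, but it proceeds by a genuinely different route from the paper. The paper works on the CDF side: it fixes a continuity point $x_0$ of $F_{\nu^*}$, assumes $F_{\nu_n}(x_0)\not\to F_{\nu^*}(x_0)$, picks $y\in M$ strictly between $F_{\nu^*}(x_0)$ and the offending subsequential values, and then reads off a contradiction from the duality $F^{-1}_{\nu^*}(y)\ge x_0+\gamma$ versus $F^{-1}_{\nu_{n_k}}(y)\le x_0$. In other words, it deduces convergence of CDFs at continuity points of $F_{\nu^*}$ directly. You instead stay entirely on the quantile side: the monotonicity sandwich upgrades convergence on $M$ to convergence at every continuity point of $F^{-1}_{\nu^*}$, hence Lebesgue-a.e., and then the Skorokhod coupling $\bm X_n=F^{-1}_{\nu_n}(\bm U)$ turns a.e.\ convergence of quantile functions into a.s.\ convergence of random variables and thus weak convergence. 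Your route is the more textbook one and makes explicit the intermediate fact (a.e.\ convergence of quantile functions) that the paper's argument bypasses; the paper's argument is shorter and avoids invoking the coupling, at the price of juggling the $F$/$F^{-1}$ duality inside a contradiction.
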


\begin{proof}
	We show that the condition of lemma implies that $F_{\bm\nu_n}\to F_{\nu^*}$ at all points where $F_{\nu^*}$ is continuous what is equivalent to the weak convergence of measures.
	Indeed, let this convergence fail at some $x_0$ where $F_{\nu^*}$ is continuous.
	Without loss of generality we can assume that there exists a subsequence $n_k$ such that $F_{\bm\nu_{n_k}}(x_0) > F_{\nu^*}(x_0) + \epsilon$ for a suitable $\epsilon > 0$.
	Take $y\in M$ such that $F_{\nu^*}(x_0) < y < F_{\nu^*}(x_0) + \epsilon$.
	Then continuity of $F_{\nu^*}$ at~$x_0$ implies that there exists $\gamma > 0$ such that $F_{\nu^*}(x) < y$ whenever $x < x_0 + \gamma$.
	Then $F^{-1}_{\nu^*}(y) \ge x_0 + \gamma$ whereas $F^{-1}_{\nu_{n_k}}(y) \le x_0$, which in the limit $n_k \to \infty$ gives a contradiction.
\end{proof}

\begin{lemma}
	\label{lem:scalar_cont}
	Taking sequences $x_i^n \to x_i$ and weights $\lambda_i^n\to \lambda_i$, $1\le i\le m$, define points
	\begin{equation}
		\overline x^n := \arg\min_{y\in \rset} \sum_{1\le i\le m} \lambda_i^n g(x_i^n - y),\quad n = 1, 2, \dots
	\end{equation}
	and $x^* := \arg\min_{y\in \rset} \sum_{1\le i\le m} \lambda_i g(x_i - y)$. Then $\overline x^n\to x^*$.
\end{lemma}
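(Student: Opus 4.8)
The plan is to read this as a standard continuity-of-the-argmin statement for a sequence of strictly convex functions. Write $f_n(y) := \sum_{1 \le i \le m} \lambda_i^n g(x_i^n - y)$ and $f(y) := \sum_{1 \le i \le m} \lambda_i g(x_i - y)$, so that $\overline x^n = \arg\min_y f_n(y)$ and $x^* = \arg\min_y f(y)$. Exactly as in the proof of Theorem~\ref{thm:emp_bar}, each of these functions is strictly convex; moreover, because $g \ge 0$ attains its minimum at the interior point~$0$, each is coercive, so the minimizers exist and are unique. I would then establish $\overline x^n \to x^*$ in two stages: first confine the whole sequence $\{\overline x^n\}$ to a fixed compact set, and then show that every limit point of the sequence must coincide with~$x^*$.

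For the convergence of the functionals I would prove that $f_n \to f$ uniformly on every compact $K \subset \rset$. Since $g$ is convex and hence continuous, and $x_i^n \to x_i$, on $K$ the arguments $x_i^n - y$ remain in a bounded set on which $g$ is uniformly continuous, while the perturbation $x_i^n - x_i \to 0$ is independent of~$y$; thus $\sup_{y \in K}\lvert g(x_i^n - y) - g(x_i - y)\rvert \to 0$, and multiplying by the convergent weights $\lambda_i^n \to \lambda_i$ and summing gives the claim.

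The boundedness stage is the one I expect to be the main obstacle, and it is the only place where the positivity of the weights is really used. Fix an index $i_0$ with $\lambda_{i_0} > 0$; for $n$ large we have $\lambda_{i_0}^n \ge \lambda_{i_0}/2 =: c > 0$ and $\lvert x_i^n\rvert \le R$ for some~$R$. Optimality gives $f_n(\overline x^n) \le f_n(x^*)$, whose right-hand side converges to $f(x^*) < +\infty$, so $f_n(\overline x^n) \le C$ for all large~$n$. As $g \ge 0$, discarding all but the $i_0$-th summand yields $c\,g(x_{i_0}^n - \overline x^n) \le f_n(\overline x^n) \le C$. Now $g$ is coercive: strict convexity together with the interior minimum forces the difference quotient $g(t)/t$ to be positive and nondecreasing for $t > 0$, whence $g(t) \to +\infty$, and symmetrically as $t \to -\infty$. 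Therefore the sublevel set $\{t : g(t) \le C/c\}$ is bounded, say contained in $[-M, M]$, so $\lvert x_{i_0}^n - \overline x^n\rvert \le M$ and $\lvert \overline x^n\rvert \le R + M$; the sequence lives in a fixed compact set~$K$.

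Finally I would combine the two stages by a subsequence argument. Suppose $\overline x^n \not\to x^*$; then, by compactness, some subsequence $\overline x^{n_k}$ converges to a limit $\bar x \ne x^*$. Uniform convergence of $f_{n_k}$ to $f$ on~$K$ together with continuity of~$f$ gives $f_{n_k}(\overline x^{n_k}) \to f(\bar x)$, while optimality and this same uniform convergence give $f_{n_k}(\overline x^{n_k}) \le f_{n_k}(x^*) \to f(x^*)$; hence $f(\bar x) \le f(x^*)$. But $x^*$ is the unique minimizer of the strictly convex function~$f$, so $\bar x = x^*$, a contradiction. Therefore $\overline x^n \to x^*$, as claimed.
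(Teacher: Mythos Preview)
Your argument is correct. You run the standard variational machinery: establish locally uniform convergence $f_n \to f$, confine the minimizers to a compact via coercivity of~$g$ and the energy bound $f_n(\overline x^n) \le f_n(x^*) \to f(x^*)$, and finish with a subsequence argument. All the steps check out (in particular your coercivity claim for~$g$ follows since the secant slope $g(t)/t$ is nondecreasing and eventually bounded below by $g(1) > 0$).

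The paper, however, takes a shorter one-dimensional route that avoids both the compactness step and the uniform convergence. It only uses \emph{pointwise} convergence $f_n(y) \to f(y)$ at two test points, namely $x^*$ and an arbitrary $y > x^*$: since $f(x^*) < f(y)$, eventually $f_n(x^*) < f_n(y)$, and then unimodality of the strictly convex $f_n$ forces $\overline x^n < y$ (otherwise $x^* < y \le \overline x^n$ would lie in the decreasing part of~$f_n$, giving $f_n(x^*) \ge f_n(y)$). Hence $\limsup \overline x^n \le y$ for every $y > x^*$, and symmetrically $\liminf \overline x^n \ge y$ for every $y < x^*$. This sandwiching is what the paper means by ``due to the convexity (and, consequently, unimodality)''; it is exactly the same trick reused later in Theorem~\ref{thm:3}. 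Your approach is the general-purpose $\Gamma$-convergence style argument and would transfer unchanged to $\rset^d$; the paper's trades that generality for a two-line proof that exploits the order structure of~$\rset$.
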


\begin{proof}
	Consider arbitrary $y > x^*$. The continuity of~$g$ ensures that
	\begin{multline}
		\lim \sum_{1\le i\le m} \lambda_i^n g(x_i^n - x^*) = \sum_{1\le i\le m} \lambda_i g(x_i - x^*) < \\
		\sum_{1\le i\le m} \lambda_i g(x_i - y) = \lim \sum_{1\le i\le n} \lambda_i^n g(x_i^n - y),
	\end{multline}
	hence beginning from some $n$ it holds 
	\begin{equation}
		\sum_{1\le i\le m} \lambda_i^n g(x_i^n - x^*) < \sum_{1\le i\le n} \lambda_i^n g(x_i^n - y).
	\end{equation}
	Due to the convexity (and, consequently, unimodality) of~$\sum_{1\le i\le n} \lambda_i^n g(x_i^n - \cdot)$, this implies that $\overline x^n \le y$ what means $\limsup \overline x^n \le y$. The remaining case $y < x^*$ can be treated similarly and this completes the proof.
\end{proof}

\begin{theorem}
	\label{thm:continuity}
	Let weights $\lambda_i^n\to \lambda_i$ and measures $\mu_i^n\rightharpoonup \mu_i$, $1\le i\le m$; then 
	\begin{equation}
		\bary(\mu_i^n, \lambda_i^n)_{1\le i\le m}\rightharpoonup \bary(\mu_i, \lambda_i)_{1\le i\le m}.
	\end{equation}
\end{theorem}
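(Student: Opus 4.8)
The plan is to reduce the weak convergence of barycenters to the pointwise convergence of their quantile functions, and then to transfer the latter from the input measures via the two preceding lemmas. Write $\nu_n^* := \bary(\mu_i^n, \lambda_i^n)_{1\le i\le m}$ and $\nu^* := \bary(\mu_i, \lambda_i)_{1\le i\le m}$. By Theorem~\ref{thm:emp_bar} these measures exist and have quantile functions
\[
\psi_n(x) = \arg\min_{y\in\rset} \sum_{1\le i\le m} \lambda_i^n\, g(F^{-1}_{\mu_i^n}(x) - y), \qquad \psi(x) = \arg\min_{y\in\rset} \sum_{1\le i\le m} \lambda_i\, g(F^{-1}_{\mu_i}(x) - y),
\]
respectively. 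In view of Lemma~\ref{lem:weak_conv}, it suffices to exhibit a dense subset $M \subset (0,1)$ on which $\psi_n \to \psi$ pointwise.

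First I would convert the weak convergence of each family $\{\mu_i^n\}_n$ into pointwise convergence of quantile functions. Recall the standard fact, dual to the characterization of weak convergence through cumulative distribution functions, that $\mu_i^n \rightharpoonup \mu_i$ is equivalent to $F^{-1}_{\mu_i^n}(x) \to F^{-1}_{\mu_i}(x)$ at every point $x\in(0,1)$ where $F^{-1}_{\mu_i}$ is continuous. Since each $F^{-1}_{\mu_i}$ is monotone, its set of discontinuities is at most countable, so the set
\[
M := \{x\in(0,1)\colon F^{-1}_{\mu_i}\text{ is continuous at }x\text{ for every }1\le i\le m\}
\]
is the complement in $(0,1)$ of a countable set; in particular $M$ is dense.

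Next, fix $x\in M$ and put $x_i^n := F^{-1}_{\mu_i^n}(x)$ and $x_i := F^{-1}_{\mu_i}(x)$. By the choice of $M$ we have $x_i^n \to x_i$ for each $i$, while $\lambda_i^n \to \lambda_i$ by hypothesis; since the weights are positive and sum to one, the limiting function $\sum_i \lambda_i g(x_i - \cdot)$ is strictly convex, so its minimizer is well defined and Lemma~\ref{lem:scalar_cont} applies, yielding $\psi_n(x) \to \psi(x)$. As $x\in M$ was arbitrary, $\psi_n \to \psi$ pointwise on the dense set $M$, and Lemma~\ref{lem:weak_conv} gives $\nu_n^* \rightharpoonup \nu^*$, which is the assertion.

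The main obstacle is the equivalence invoked in the second step: that weak convergence of measures on $\rset$ transfers to pointwise convergence of quantile functions at continuity points of the limiting quantile function. This is classical, but if one prefers a self-contained argument it can be obtained by a sandwiching argument entirely parallel to the proof of Lemma~\ref{lem:weak_conv}, using monotonicity of the $F_{\mu_i^n}$ and $F^{-1}_{\mu_i^n}$ together with the continuity of $F^{-1}_{\mu_i}$ at the point in question; the remaining steps are routine applications of the two lemmas already established.
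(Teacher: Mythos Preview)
Your argument is correct and follows the same route as the paper: convert weak convergence of the $\mu_i^n$ into convergence of quantile functions at continuity points of $F^{-1}_{\mu_i}$ (hence on a dense, indeed co-countable, subset of $(0,1)$), apply Lemma~\ref{lem:scalar_cont} pointwise there, and conclude via Lemma~\ref{lem:weak_conv}. The paper's proof is the same two-lemma combination, just stated in two sentences; you have merely written out the details.
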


\begin{proof}
	The weak convergence of measures $\mu_i^n$ means that for all $1\le i\le m$ it holds $F_{\mu_i^n}^{-1}(x)\to F_{\mu_i}^{-1}(x)$ if $F_{\mu_i}^{-1}$ is continuous at point~$x$, i.e., for a.e. $x\in (0, 1)$. Now combining Lemmas~\ref{lem:weak_conv} and~\ref{lem:scalar_cont} one can obtain the statement.
\end{proof}

\section{The Fr{\'e}chet barycenter of a continuous distribution}
\label{sec:baryc-family-meas}

Now we extend the notion of Fr{\'e}chet barycenter to continuous families of measures, which allows to define an ``expected value'' for a probability distribution over~$\mathcal{P}(\rset)$.
Endow $\mathcal{P}(\rset)$ with topology of weak convergence, and let $\mathcal{B}(\mathcal{P}(\rset))$ be the corresponding Borel $\sigma$-algebra.
We need a technical measurability lemma whose proof is postponed to Appendix.

\begin{lemma}
	\label{lm:1}
	The function $K(\mu, x) := F^{-1}_\mu(x)$, where $\mu \in \mathcal{P}(\rset)$ and $0 < x < 1$, is measurable with respect to the product $\sigma$-algebra $\mathcal{B}(\mathcal{P}(\rset)) \otimes \mathcal{B}((0, 1))$.
\end{lemma}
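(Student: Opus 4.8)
The plan is to prove joint measurability in two stages: first show that for each fixed $x \in (0,1)$ the map $\mu \mapsto F^{-1}_\mu(x)$ is measurable on $\mathcal{P}(\rset)$, and then exploit the fact that for each fixed $\mu$ the map $x \mapsto F^{-1}_\mu(x)$ is nondecreasing and left-continuous to upgrade this ``separate'' measurability to joint measurability by a monotone approximation argument.

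For the first stage I would start from the observation that for each fixed $t \in \rset$ the evaluation map $\mu \mapsto F_\mu(t) = \mu((-\infty, t))$ is Borel measurable on $\mathcal{P}(\rset)$: since $(-\infty, t)$ is open, $\mu \mapsto \mu((-\infty, t))$ is lower semicontinuous for the topology of weak convergence by the Portmanteau theorem, hence Borel. Using that $F_\mu$ is nondecreasing, I would then establish the identity
\[
	\{\mu\colon F^{-1}_\mu(x) < s\} = \bigcup_{\substack{t \in \mathbb{Q}\\ t < s}} \{\mu\colon F_\mu(t) \ge x\},
\]
valid for every $x \in (0,1)$ and $s \in \rset$: indeed $F^{-1}_\mu(x) = \inf\{t\colon F_\mu(t) \ge x\} < s$ holds iff some $t < s$ satisfies $F_\mu(t) \ge x$, and monotonicity of $F_\mu$ lets one take $t$ rational. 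Each set on the right is measurable by the previous remark, and the union is countable, so $\mu \mapsto F^{-1}_\mu(x)$ is measurable for every fixed $x$.

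For the second stage I would discretize the $x$-variable on the dyadic grid. For $n \ge 1$ define $\phi_n\colon (0,1) \to (0,1)$ by $\phi_n(x) := 2^{-n}\lfloor 2^n x\rfloor$ when $x \ge 2^{-n}$ and $\phi_n(x) := 2^{-n}$ otherwise, so that $\phi_n$ is a Borel step function taking the finitely many dyadic values $v_j := j\,2^{-n}$, $1 \le j \le 2^n - 1$. Setting $h_n(\mu, x) := F^{-1}_\mu(\phi_n(x))$, one can write
\[
	h_n(\mu, x) = \sum_{j=1}^{2^n - 1} F^{-1}_\mu(v_j)\, \mathbf{1}_{\phi_n^{-1}(v_j)}(x),
\]
a finite sum of products of a function of $\mu$ alone (measurable by the first stage) and an indicator of a Borel subset of $(0,1)$; each such product is $\mathcal{B}(\mathcal{P}(\rset)) \otimes \mathcal{B}((0,1))$-measurable, hence so is $h_n$. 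Since $\phi_n(x) \le x$ for $n$ large and $\phi_n(x) \to x$, left-continuity of $x \mapsto F^{-1}_\mu(x)$ gives $h_n(\mu, x) \to F^{-1}_\mu(x)$ for every $(\mu, x)$; a pointwise limit of jointly measurable functions is jointly measurable, which yields the claim.

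The main obstacle is the first stage, i.e., tracking the interplay of the infimum defining the quantile function with the one-sided continuity conventions so as to reduce $\{F^{-1}_\mu(x) < s\}$ to countably many superlevel sets of the evaluation maps $\mu \mapsto F_\mu(t)$; once separate measurability in $\mu$ is in hand, the passage to joint measurability through monotone dyadic approximation is routine, the only care needed being the boundary behavior near $x = 0$ and the use of left-continuity rather than right-continuity.
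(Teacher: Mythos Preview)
Your argument is correct. Both stages hold as written: the sublevel-set identity in the first stage is valid because $F_\mu$ is nondecreasing, and in the second stage the dyadic truncation $\phi_n$ takes values in $(0,1)$, satisfies $\phi_n(x)\le x$ and $\phi_n(x)\to x$ once $2^{-n}<x$, so left-continuity of $F^{-1}_\mu$ gives the desired pointwise limit.

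The paper proceeds differently. For each fixed $x$ it shows the stronger fact that the \emph{super}level set $\{\mu\colon F^{-1}_\mu(x)>a\}$ is actually open in the weak topology, by exhibiting an explicit continuous test function $v$ that separates a given $\bar\mu$ in this set from any $\mu$ outside it. It then bypasses the approximation step entirely: using monotonicity and left-continuity in~$x$, it writes the full preimage $K^{-1}((a,+\infty))$ directly as the countable union $\bigcup_{s\in(0,1)\cap\mathbb Q}\{\mu\colon F^{-1}_\mu(s)>a\}\times[s,1)$, a union of open-times-Borel rectangles. So the paper trades your Portmanteau-plus-countable-union step for a hands-on openness argument, and trades your dyadic approximation for a one-line section decomposition; its route gives a slightly sharper topological statement (openness rather than mere Borelness of the $\mu$-sections), while yours is the more portable ``separate measurability plus monotone approximation'' template.
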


Let now $\bm\mu$ be a random element of~$\mathcal{P}(\rset)$ distributed according to a law~$P_\mu$.
Recall from the last section that $g(\cdot)$ is assumed to be a strictly convex function attaining on~$\rset$ a minimal value $g(0) = 0$.
For the functional $J(\mu, \nu)$ defined in~\eqref{eq:8}
\begin{equation}
	\label{eq:24}
	\ave J(\bm\mu, \nu) = \int_{\mathcal{P}(\rset)} J(\mu, \nu)\, \diff P_\mu \in [0, +\infty].
\end{equation}

\begin{definition}
	Consider the problem of minimizing $\ave  J(\bm\mu, \nu)$ over~$\mathcal{P}(\rset)$ and denote its solution by $\bary(P_\mu)$ or~$\nu^*$ for short.  
	We call the measure $\bary(P_\mu)$ the \emph{Fr{\'e}chet barycenter} of the distribution~$P_\mu$.
\end{definition}

\begin{lemma}
	\label{lm:2}
	Let $\bm X$ be a random element of~$\rset$ with distribution~$P_X$.
	For any $x\in \rset$ consider the function
	\begin{equation}
		\label{eq:30}
		\xi(x) := \ave g(\bm X - x)
	\end{equation}
	taking values in $[0, +\infty]$.
	This function attains a unique minimum provided $\xi(x) < +\infty$ for some~$x$.
\end{lemma}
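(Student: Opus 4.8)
The plan is to establish three structural properties of $\xi$---convexity (in fact strict convexity on its finite domain), lower semicontinuity, and coercivity---and then to combine coercivity with lower semicontinuity for existence of a minimizer and strict convexity for its uniqueness. The hypothesis $\xi(x_0) < +\infty$ for some $x_0$ guarantees that $\xi$ is proper, so $m := \inf_x \xi(x)$ satisfies $0 \le m < +\infty$.

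First I would record convexity. For $x_1, x_2 \in \rset$ and $\lambda \in (0, 1)$ the identity $\bm X - (\lambda x_1 + (1 - \lambda) x_2) = \lambda(\bm X - x_1) + (1 - \lambda)(\bm X - x_2)$ together with convexity of $g$ yields a pointwise inequality that integrates (respecting possible $+\infty$ values of the summands) to $\xi(\lambda x_1 + (1 - \lambda) x_2) \le \lambda \xi(x_1) + (1 - \lambda) \xi(x_2)$. Since $g$ is \emph{strictly} convex and $\bm X - x_1 \ne \bm X - x_2$ whenever $x_1 \ne x_2$, this pointwise inequality is strict for every value of $\bm X$; integrating a strictly positive difference against $P_X$ produces a strict inequality as soon as the right-hand side is finite. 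Hence $\xi$ is strictly convex on the set where it is finite, which immediately forces uniqueness: two distinct minimizers would both lie in that finite domain, and their midpoint would then have strictly smaller value, contradicting minimality.

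Next I would observe that $g$, being a nonconstant convex function on $\rset$ (nonconstant because strictly convex) with minimum $g(0) = 0$, is unbounded above and therefore satisfies $g(t) \to +\infty$ as $|t| \to \infty$. With this in hand, both lower semicontinuity and coercivity of $\xi$ follow from Fatou's lemma applied to the nonnegative integrands $g(\bm X - \cdot)$. For lower semicontinuity, if $x_n \to x$ then continuity of $g$ gives $g(\bm X - x_n) \to g(\bm X - x)$ pointwise, so $\liminf_n \xi(x_n) \ge \xi(x)$. For coercivity, $g(\bm X - x) \to +\infty$ pointwise as $|x| \to \infty$, whence $\xi(x) \to +\infty$.

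Finally, for existence I would take a minimizing sequence $x_n$ with $\xi(x_n) \to m$; coercivity makes $x_n$ bounded, so some subsequence $x_{n_k} \to x^*$, and lower semicontinuity gives $\xi(x^*) \le \liminf_k \xi(x_{n_k}) = m$, so $x^*$ attains the minimum. The only delicate point, and the one I would be careful about, is the bookkeeping of the $[0, +\infty]$-valued integrands: justifying the passages to the limit under the integral sign via Fatou, and checking that the strict pointwise inequality coming from strict convexity survives integration---which it does precisely because the dominating side is finite on the finite domain. Everything else is routine.
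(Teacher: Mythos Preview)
Your proposal is correct and follows essentially the same route as the paper: lower semicontinuity via Fatou, strict convexity inherited from~$g$, and coercivity from $g(t)\to+\infty$ as $|t|\to\infty$, combined for existence and uniqueness. You are in fact more explicit than the paper about the minimizing-sequence argument and about why the strict pointwise inequality survives integration, points the paper simply asserts.
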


\begin{proof}
	Let's show that $\xi(\cdot)$ is lower semi-continuous. Indeed consider any point $x\in \rset$ and sequence $x_n\to x$. Then using continuity of~$g(\cdot)$ and Fatou's lemma one can obtain
	\begin{equation}
		\xi(x) := \ave g(\bm X - x) = \ave \lim g(\bm X - x_n)\le \liminf \ave g(\bm X - x_n) := \liminf \xi(x_n)
	\end{equation}
	which is the definition of lower semi-continuity.
	Clearly $\xi(\cdot)$ is strictly convex because so is~$g(\cdot)$. Note also that $\lim_{x\to\pm\infty} g(x) = +\infty$ implies that $\lim_{x\to\pm\infty} \xi(x) = +\infty$. Thus a minimum of~$\xi(\cdot)$ exists and is unique, due to the strict convexity of~$\xi(\cdot)$.
\end{proof}

\begin{theorem}
	\label{thm:prob_bar}
	Suppose $\bm\mu \in \mathcal{P}(\rset)$ is a measure-valued random variable with distribution~$P_\mu$ and $\ave J(\bm\mu, \nu) < +\infty$ for some $\nu \in \mathcal{P}(\rset)$.
	Then there exists a unique solution $\nu^* = \psi_\#\left.\mathcal{L}\right|_{(0, 1)}$, where for a.e.\ $x\in (0, 1)$ \textup{(}\emph{cf.} \eqref{eq:11}\textup{)}
	\begin{equation}
		\label{eq:28}
		\psi(x) := \arg\min_{y\in\rset} \ave g(F^{-1}_{\bm\mu}(x) - y).
	\end{equation}
\end{theorem}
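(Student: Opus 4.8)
The plan is to follow the template of the proof of Theorem~\ref{thm:emp_bar}, replacing the finite weighted sums by expectations and supplying the measure-theoretic justifications afforded by Lemmas~\ref{lm:1} and~\ref{lm:2}. First I would fix a measure $\nu$ with $\ave J(\bm\mu, \nu) < +\infty$ and apply Tonelli's theorem, which is legitimate because $g \ge 0$ and, by Lemma~\ref{lm:1} together with the continuity of $g$, the map $(\mu, x) \mapsto g(F^{-1}_\mu(x) - F^{-1}_\nu(x))$ is jointly measurable. This yields
\[
	\ave J(\bm\mu, \nu) = \int_0^1 \ave g\bigl(F^{-1}_{\bm\mu}(x) - F^{-1}_\nu(x)\bigr)\, \diff x < +\infty,
\]
so that for a.e.\ $x \in (0,1)$ the inner expectation is finite. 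Taking $y = F^{-1}_\nu(x)$ then shows that $\xi_x(y) := \ave g(F^{-1}_{\bm\mu}(x) - y)$ is finite for some $y$, and Lemma~\ref{lm:2}, applied to the genuine real random variable $\bm X = F^{-1}_{\bm\mu}(x)$ (measurable in $\bm\mu$ by Lemma~\ref{lm:1}), guarantees that $\xi_x$ attains a unique minimum. Hence $\psi(x)$ in~\eqref{eq:28} is well-defined for a.e.\ $x$.

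Next I would show that $\psi$ is nondecreasing on its domain, arguing as for~\eqref{eq:13}--\eqref{eq:16}. Assuming $\psi(x_1) > \psi(x_2)$ for some $x_1 < x_2$, I would use that $F^{-1}_{\bm\mu}(x_1) \le F^{-1}_{\bm\mu}(x_2)$ for every realization of $\bm\mu$ and invoke Lemma~\ref{lm:convex} pointwise to obtain the rearrangement inequality for the integrands; taking expectations gives the expectation-analog of~\eqref{eq:14}. On the other hand, the uniqueness of the minima from Lemma~\ref{lm:2} supplies the strict inequalities $\ave g(F^{-1}_{\bm\mu}(x_1) - \psi(x_2)) > \ave g(F^{-1}_{\bm\mu}(x_1) - \psi(x_1))$ and $\ave g(F^{-1}_{\bm\mu}(x_2) - \psi(x_1)) > \ave g(F^{-1}_{\bm\mu}(x_2) - \psi(x_2))$, and adding these contradicts the previous inequality. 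Being monotone, $\psi$ is measurable and is the quantile function of a unique measure $\nu^* \in \mathcal{P}(\rset)$, defined through its cumulative distribution function exactly as in Theorem~\ref{thm:emp_bar}; thus $F^{-1}_{\nu^*} = \psi$ a.e.

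Finally I would establish optimality by the same chain as~\eqref{eq:17}, now mediated by Tonelli: for arbitrary $\nu$,
\[
	\ave J(\bm\mu, \nu) = \int_0^1 \ave g\bigl(F^{-1}_{\bm\mu}(x) - F^{-1}_\nu(x)\bigr)\, \diff x \ge \int_0^1 \ave g\bigl(F^{-1}_{\bm\mu}(x) - \psi(x)\bigr)\, \diff x = \ave J(\bm\mu, \nu^*),
\]
where the inequality holds pointwise in $x$ because $\psi(x)$ minimizes $y \mapsto \ave g(F^{-1}_{\bm\mu}(x) - y)$. The uniqueness in Lemma~\ref{lm:2} forces equality in the integrand only when $F^{-1}_\nu(x) = \psi(x)$ for a.e.\ $x$, i.e.\ $\nu = \nu^*$, which simultaneously identifies $\nu^*$ as the Fr\'echet barycenter and proves uniqueness. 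I expect the main obstacle to be purely measure-theoretic rather than structural: one must carefully justify the joint measurability underlying the two applications of Tonelli and confirm that $\psi$ fails to be defined only on a Lebesgue-null subset of $(0,1)$, so that the pushforward $\psi_\#\mathcal{L}|_{(0,1)}$ is unaffected — both points resting on Lemma~\ref{lm:1}. The convexity and rearrangement part of the argument carries over verbatim from the finite case once sums are read as expectations.
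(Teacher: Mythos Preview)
Your proposal is correct and follows essentially the same route as the paper: define $\psi$ quantile-wise via Lemma~\ref{lm:2}, prove monotonicity by the rearrangement argument of Theorem~\ref{thm:emp_bar} with sums replaced by expectations, and verify optimality by swapping the order of integration using the joint measurability supplied by Lemma~\ref{lm:1}. If anything, you are slightly more explicit than the paper about the measure-theoretic bookkeeping (invoking Tonelli for the nonnegative integrand and spelling out that $\bm X = F^{-1}_{\bm\mu}(x)$ is a bona fide random variable), which is all to the good.
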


\begin{proof}
	Examination of equation~\eqref{eq:8} shows that for Lebesgue-a.e.\ $0 < x < 1$ there exists a value $y_x$ such that $\ave g(F^{-1}_{\bm\mu}(x) - y_x) < +\infty$.
	Therefore the function
	\begin{equation}
		\label{eq:29}
		\psi(x) := \arg\min_{y\in\rset} \ave g(F^{-1}_{\bm\mu}(x) - y)
	\end{equation}
	is defined according to Lemma~\ref{lm:1} for Lebesgue-a.e.\ $0 < x < 1$.
	Using the same argument as in Theorem~\ref{thm:emp_bar} (though with integrals over $P_\mu$ instead of sums over $\lambda_i$), one can show that the function~$\psi$ is nondecreasing.
	Thus it can be extended over the whole interval $(0, 1)$, e.g., by left continuity.
	Then $\psi$ generates a measure $\nu^* = \psi_\#\left.\mathcal{L}\right|_{(0, 1)} \in\mathcal{P}(\rset)$, where $F^{-1}_{\nu^*}(\cdot) = \psi(\cdot)$ a.e.

	We are left with a task of checking that $\nu^*$ is a Fr{\'e}chet barycenter.
	This can again be done in the same way as in Theorem~\ref{thm:emp_bar}, exchanging the order of integration over $0 < x < 1$ and integration over $\mathcal{P}(\rset)$ (which replaces summation over~$i$) by Fubini's theorem.
	In the process we employ the measurability lemma formulated at the beginning of this section.
	Uniqueness of the Fr{\'e}chet barycenter follows because $\psi$ (more precisely, its left-continuous version) is defined uniquely.
\end{proof}

\section{Weak convergence of the empirical Fr{\'e}chet barycenter}
\label{sec:weak_convergence}

First we prove a version of law of large numbers for a nonlinear version of averaging performed in terms of the function~$g$ over ordinary scalar random variables.
We then employ this result to prove convergence of quantiles for a sequence of empirical Fr{\'e}chet barycenters, which implies the weak convergence of the barycenters themselves.

\begin{theorem}
	\label{thm:3}
	Let $\{\bm X_n\}_{n\in\nset}$ be a sequence of i.i.d.\ real random variables with distribution~$P_X$ such that the function $x\mapsto \xi(x) = \ave g(\bm X - x)$ defined in~\eqref{eq:30} is finite for some $x\in\rset$.
	Let $x^* := \arg\min_{x\in\rset} \xi(x)$ and define the random variables
	\begin{equation}
		\label{eq:33}
		\overline{\bm X}_n := \arg\min_{x\in\rset} \sum_{1\le i\le n} g(\bm X_i - x),\qquad n = 1, 2, \dots
	\end{equation}
	Then $\overline{\bm X}_n$ converges to~$x^*$ a.s.
\end{theorem}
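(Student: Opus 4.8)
The plan is to exploit the convexity of the random objective so as to reduce the claim to the pointwise strong law of large numbers at finitely many points, thereby avoiding any uniform law of large numbers. Write $G_n(x) := \frac1n \sum_{1\le i\le n} g(\bm X_i - x)$; since each summand is finite, strictly convex in $x$, and tends to $+\infty$ as $x\to\pm\infty$, the function $G_n$ is finite, strictly convex, and coercive, so $\overline{\bm X}_n$ (minimizing $G_n$ is the same as minimizing the sum in~\eqref{eq:33}) is well defined and unique for every realization and every $n$. By Lemma~\ref{lm:2} the limit $\xi$ is strictly convex and attains its minimum at the unique point $x^*$, so that $\xi(x) > \xi(x^*)$ for every $x \ne x^*$. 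It therefore suffices to prove that for each fixed $\delta > 0$ we have, almost surely, $\overline{\bm X}_n \in (x^* - \delta, x^* + \delta)$ for all large enough $n$; intersecting these events over $\delta = 1/k$, $k\in\nset$, then yields $\overline{\bm X}_n \to x^*$ almost surely.

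Fix $\delta > 0$ and consider the three points $x^*$ and $x^* \pm \delta$. For each fixed $x$ the random variables $g(\bm X_i - x)$ are i.i.d.\ and nonnegative, so the strong law of large numbers, in its extended form for nonnegative summands (with the convention that $G_n(x) \to +\infty$ almost surely when $\ave g(\bm X - x) = +\infty$), gives $G_n(x) \to \xi(x)$ almost surely in $[0, +\infty]$. Applying this at the three chosen points and intersecting the three probability-one events, we obtain an event of probability one on which $G_n(x^*) \to \xi(x^*) < +\infty$ while $G_n(x^* \pm \delta) \to \xi(x^* \pm \delta)$, each limit being strictly larger than $\xi(x^*)$ (whether finite or $+\infty$). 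Consequently, on this event, $G_n(x^* - \delta) > G_n(x^*)$ and $G_n(x^* + \delta) > G_n(x^*)$ hold simultaneously for all sufficiently large $n$.

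The final step is a convexity (unimodality) argument: if a convex function takes a strictly smaller value at the interior point $x^*$ than at both endpoints $x^* \pm \delta$, then its minimizer lies in the open interval $(x^* - \delta, x^* + \delta)$. Indeed, for $x < x^* - \delta$ one writes $x^* - \delta$ as a convex combination of $x$ and $x^*$ and uses convexity of $G_n$ to deduce $G_n(x) > G_n(x^*)$, and symmetrically for $x > x^* + \delta$; hence the unique minimizer $\overline{\bm X}_n$ is trapped in $(x^* - \delta, x^* + \delta)$, which is exactly what was needed. The only delicate point is the handling of directions in which $\xi$ is identically $+\infty$ arbitrarily close to $x^*$, a situation not excluded by the hypotheses, since $g$ may grow fast enough that $\ave g(\bm X - x)$ is finite at $x^*$ yet infinite for nearby $x$; this is precisely why the extended strong law of large numbers, rather than its classical integrable version, is invoked at $x^* \pm \delta$. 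Apart from this, no uniform control over $x$ is required, the convexity of $G_n$ doing all the work of propagating a comparison at three points into a statement about the location of the minimizer.
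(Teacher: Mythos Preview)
Your proof is correct and follows essentially the same route as the paper: apply the strong law of large numbers pointwise at $x^*$ and at nearby test points, then use the convexity (unimodality) of $G_n$ to trap the minimizer, exactly as in the paper's appeal to Lemma~\ref{lem:scalar_cont}. Your version is in fact slightly more careful than the paper's, since you make explicit the countable intersection over $\delta=1/k$ and you invoke the extended strong law for nonnegative summands to cover the possibility $\xi(x^*\pm\delta)=+\infty$, a point the paper leaves implicit.
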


\begin{proof}
	It follows from Lemma~\ref{lm:2} that $x^*$ is well defined. Consider arbitrary $x > x^*$. By the strong law of large numbers one can obtain that a.s.
	\begin{equation}
		\lim \frac1n \sum_{1\le i\le n} g(\bm X_i - x^*) = \ave g(\bm X - x^*) := \xi(x^*) < \xi(x) = \lim \frac1n \sum_{1\le i\le n} g(\bm X_i - x).
	\end{equation}
	Now one can complete the proof in the same way as in Lemma~\ref{lem:scalar_cont}.
\end{proof}

\begin{theorem}
	\label{thm:4}
	Take a sequence $\{\bm\mu_n\}$ of random measures in~$\mathcal{P}(\rset)$, independent and identically distributed with the law~$P_\mu$ such that conditions of Theorem~\ref{thm:prob_bar} are satisfied.
	Then empirical Fr{\'e}chet barycenters $\bm\nu_n = \bary(\bm\mu_i, 1/n)_{1\le i\le n}$ weakly converge to $\nu^* := \bary(P_\mu)$ almost surely.
\end{theorem}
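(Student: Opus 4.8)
The plan is to deduce the weak convergence from convergence of quantile functions via Lemma~\ref{lem:weak_conv}, recognizing that at each fixed quantile level the convergence is nothing but the scalar law of large numbers of Theorem~\ref{thm:3}. Concretely, Theorem~\ref{thm:emp_bar} applied with equal weights $\lambda_i = 1/n$ shows that $\bm\nu_n$ has quantile function
\[
  F^{-1}_{\bm\nu_n}(x) = \arg\min_{y\in\rset} \frac1n\sum_{1\le i\le n} g\bigl(F^{-1}_{\bm\mu_i}(x) - y\bigr) = \arg\min_{y\in\rset} \sum_{1\le i\le n} g\bigl(F^{-1}_{\bm\mu_i}(x) - y\bigr),
\]
the second equality holding because scaling the objective by $1/n$ does not move its minimizer. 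On the other hand, Theorem~\ref{thm:prob_bar} gives $F^{-1}_{\nu^*}(x) = \psi(x) = \arg\min_{y} \ave g(F^{-1}_{\bm\mu}(x) - y)$ for a.e.\ $x$. Thus, for each fixed $x$, the two expressions differ only in that the empirical average over the sample replaces the expectation, which is precisely the situation of Theorem~\ref{thm:3}.

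To make this rigorous, fix $x\in(0,1)$. Since the $\bm\mu_i$ are i.i.d.\ and the map $\mu\mapsto F^{-1}_\mu(x)$ is measurable by Lemma~\ref{lm:1}, the real random variables $\bm Y_i := F^{-1}_{\bm\mu_i}(x)$ are i.i.d., with common law the pushforward of $P_\mu$ under this map; the corresponding function $y\mapsto \ave g(\bm Y_1 - y) = \ave g(F^{-1}_{\bm\mu}(x) - y)$ is, by the argument in the proof of Theorem~\ref{thm:prob_bar}, finite for some $y$ for Lebesgue-a.e.\ $x$. For any such $x$, Theorem~\ref{thm:3} applied to $\{\bm Y_i\}$ yields $F^{-1}_{\bm\nu_n}(x)\to\psi(x) = F^{-1}_{\nu^*}(x)$ almost surely. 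This, however, is convergence at a single fixed $x$, with an exceptional null event that may depend on $x$.

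The one genuinely delicate step is therefore the interchange of the quantifiers ``for all $x$'' and ``almost surely'': I would fix once and for all a countable dense set $D\subset(0,1)$ contained in the full-measure set of $x$ for which the finiteness condition above holds and at which $\psi$ is continuous (this excludes only the at most countably many jump points of the nondecreasing $\psi$, and at continuity points the left-continuous version of $\psi$ agrees with $F^{-1}_{\nu^*}$). For each $x\in D$, Theorem~\ref{thm:3} furnishes a null event $N_x$ off which $F^{-1}_{\bm\nu_n}(x)\to F^{-1}_{\nu^*}(x)$; since $D$ is countable, $N := \bigcup_{x\in D} N_x$ is null, and off $N$ this convergence holds simultaneously for every $x\in D$. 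Applying Lemma~\ref{lem:weak_conv} with the dense set $M = D$ then gives $\bm\nu_n\rightharpoonup\nu^*$ off $N$, which is the claimed almost sure weak convergence. Apart from the countability argument just described, the proof is a direct concatenation of Theorems~\ref{thm:emp_bar}, \ref{thm:prob_bar}, and~\ref{thm:3} with Lemma~\ref{lem:weak_conv}.
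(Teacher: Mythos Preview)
Your proof is correct and follows essentially the same route as the paper's: apply Theorem~\ref{thm:3} quantile-by-quantile to get $F^{-1}_{\bm\nu_n}(x)\to\psi(x)=F^{-1}_{\nu^*}(x)$ a.s.\ for a.e.~$x$, then invoke Lemma~\ref{lem:weak_conv}. The paper is terser about the interchange of ``for a.e.~$x$'' and ``almost surely,'' which you spell out via the countable dense set $D$ and the union of null events $N_x$; this is exactly the argument the paper has in mind when it asserts the existence of a dense set $M$ on which convergence holds simultaneously.
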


\begin{proof}
	Define a function $\psi(\cdot)$ as in Theorem~\ref{thm:prob_bar}:
	\begin{equation}
		\psi(x) := \arg\min_{y\in\rset} \ave g(F^{-1}_{\bm\mu}(x) - y)
	\end{equation}
	for a.e.\ $0 < x < 1$.
	Theorem~\ref{thm:3} ensures that the quantile $F^{-1}_{\bm\nu_n}(x)$ converges to~$\psi(x)$ almost surely for a.e.\ $0 < x < 1$.
	As $\psi(\cdot) = F_{\nu^*}^{-1}(\cdot)$ a.e., there exists a dense set $M \subset (0, 1)$ such that a.s. $F^{-1}_{\bm\nu_n}(x) \to F^{-1}_{\nu^*}(x)$ for all $x \in M$. By Lemma~\ref{lem:weak_conv} this implies the statement.
\end{proof}

\section{Convergence of the empirical Fr{\'e}chet barycenter\\ with respect to the transportation cost~$J$}
\label{sec:strong_convergence}

In this section we will additionally assume that the cost function $g(\cdot)$ satisfies the following condition: there exist nonnegative constants $A$, $B$ such that 
\begin{equation}
	\label{eq:growth_rate}
	g(x - y)\le A + B \bigl(g(x) + g(y)\bigr)
\end{equation}
for all~$x, y\in\rset$ (note that a similar condition with $A = 0$ appears in Fr{\'e}chet's memoir \cite[p.~228]{Frechet.M:1948}).
This condition is not exceedingly restrictive.
In particular, it is satisfied for cost functions of algebraic growth:

\begin{lemma}
	For $p > 0$ suppose that constants $0 < C_1 < C_2$, $x_0 > 0$ are such that $C_1 |x|^p < g(x) < C_2 |x|^p$ for any $|x| > x_0$.
	Then \eqref{eq:growth_rate} holds.
\end{lemma}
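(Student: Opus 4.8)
The plan is to reduce the claim to the elementary power inequality
$|x - y|^p \le D_p\bigl(|x|^p + |y|^p\bigr)$, valid for all real $x, y$ with $D_p := \max(1, 2^{p-1})$, and then to patch up the regions where the two-sided algebraic bound on $g$ is unavailable using nothing more than continuity of $g$.

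First I would establish the power inequality. Since $|x - y| \le |x| + |y|$, it suffices to bound $(|x| + |y|)^p$. For $p \ge 1$ the convexity of $t \mapsto t^p$ on $[0, +\infty)$ gives $(a + b)^p \le 2^{p-1}(a^p + b^p)$, while for $0 < p < 1$ the concavity (hence subadditivity) of $t \mapsto t^p$ gives $(a + b)^p \le a^p + b^p$; in either case $(|x| + |y|)^p \le D_p(|x|^p + |y|^p)$.

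Next I would split according to the size of $|x - y|$. Because $g$ is convex and finite it is continuous, so $M := \max_{|t| \le x_0} g(t)$ is finite. If $|x - y| \le x_0$, then $g(x - y) \le M$, and the desired inequality holds as soon as $A \ge M$, irrespective of $B \ge 0$. If instead $|x - y| > x_0$, then $g(x - y) < C_2 |x - y|^p \le C_2 D_p(|x|^p + |y|^p)$ by the hypothesis and the power inequality. Each of the terms $|x|^p$, $|y|^p$ I would bound in two ways: when the argument exceeds $x_0$ in absolute value the lower bound gives $|x|^p < g(x)/C_1$, and otherwise $|x|^p \le x_0^p$ is a constant. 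Either way $C_2 D_p |x|^p \le (C_2 D_p / C_1)\, g(x) + C_2 D_p x_0^p$ (and likewise for $y$), whence $g(x - y) \le (C_2 D_p / C_1)(g(x) + g(y)) + 2 C_2 D_p x_0^p$.

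Collecting the two cases, the choice $B := C_2 D_p / C_1$ and $A := \max(M, 2 C_2 D_p x_0^p)$ yields \eqref{eq:growth_rate} for all $x, y \in \rset$. The only genuinely delicate point is the behavior for small arguments: the algebraic bounds on $g$ are assumed only outside $[-x_0, x_0]$, so in the crossover regions where $|x|$, $|y|$, or $|x - y|$ falls below $x_0$ those bounds cannot be used and the bounded contributions must instead be absorbed into the additive constant $A$ by appealing to continuity of $g$ on the compact interval $[-x_0, x_0]$. Everything else is a routine verification.
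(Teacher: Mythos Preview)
Your proof is correct and follows essentially the same approach as the paper's: bound $g(x-y)$ above by a power, split the power via $|x-y|^p \lesssim |x|^p + |y|^p$, and then bound each $|x|^p$ by $g(x)/C_1$ plus a constant. The only cosmetic differences are that the paper avoids your case split on $|x-y|$ by using $\max\{C_0, C_2|x-y|^p\} \le C_0 + C_2|x-y|^p$ (and similarly for $|x|^p$), and it takes the cruder constant $2^p$ in the power inequality rather than your sharper $D_p = \max(1, 2^{p-1})$.
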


\begin{proof}
	Since $g(\cdot)$ is continuous, there exists a finite positive $C_0 := \max_{|x|\le x_0} g(x)$.
	Then for any $x$, $y$ on $\rset$
	\begin{align}
	g(x - y) &\le \max\{C_0, C_2 |x - y|^p\}\le C_0 + C_2 |x - y|^p\\
	|x - y|^p &\le 2^p \max\{|x|^p, |y|^p\} \le 2^p (|x|^p + |y|^p)\\
	|x|^p &\le \max\{x_0^p, \frac{g(x)}{C_1}\}\le x_0^p + \frac{g(x)}{C_1}
	\end{align}
	Gathering these results, we get the desired inequality:
	\[g(x - y) \le (C_0 + 2^{p+1} C_2 x_0^p) + 2^p \frac{C_2}{C_1} \bigl(g(x) + g(y)\bigr). \qedhere\]
\end{proof}

\begin{lemma}
	Let the relation $\mu \sim \nu$ be defined as $J(\mu, \nu) < +\infty$; then it is an equivalence on~$\mathcal{P}(\rset)$.
\end{lemma}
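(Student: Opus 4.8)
The plan is to verify the three defining properties of an equivalence relation — reflexivity, symmetry, and transitivity — using the explicit quantile representation~\eqref{eq:8} of~$J$ together with the growth condition~\eqref{eq:growth_rate}. Throughout I would write quantile differences as scalar functions on~$(0, 1)$ and reduce every claim to a pointwise inequality for~$g$ that is then integrated. Reflexivity is immediate: by~\eqref{eq:8}, $J(\mu, \mu) = \int_0^1 g(F^{-1}_\mu(x) - F^{-1}_\mu(x))\, \diff x = \int_0^1 g(0)\, \diff x = 0 < +\infty$.

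For symmetry I would first extract from~\eqref{eq:growth_rate}, by setting its first argument to~$0$ and using $g(0) = 0$, the one-sided estimate $g(-t) \le A + B\, g(t)$ valid for all $t \in \rset$. Writing $h(x) := F^{-1}_\mu(x) - F^{-1}_\nu(x)$, so that $F^{-1}_\nu - F^{-1}_\mu = -h$, integration of this estimate gives $J(\nu, \mu) = \int_0^1 g(-h(x))\, \diff x \le A + B \int_0^1 g(h(x))\, \diff x = A + B\, J(\mu, \nu)$, whence $J(\mu, \nu) < +\infty$ forces $J(\nu, \mu) < +\infty$. This is the step that genuinely requires~\eqref{eq:growth_rate}, since $g$ is not assumed even and $J$ is a priori asymmetric.

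For transitivity, suppose $J(\mu, \nu) < +\infty$ and $J(\nu, \rho) < +\infty$. Set $a(x) := F^{-1}_\mu(x) - F^{-1}_\nu(x)$ and $b(x) := F^{-1}_\nu(x) - F^{-1}_\rho(x)$, so that $F^{-1}_\mu(x) - F^{-1}_\rho(x) = a(x) + b(x)$ pointwise. Applying~\eqref{eq:growth_rate} with first argument~$a$ and second argument~$-b$ yields $g(a + b) \le A + B\bigl(g(a) + g(-b)\bigr)$, and combining this with the one-sided estimate $g(-b) \le A + B\, g(b)$ from the symmetry step gives a pointwise bound of $g(a + b)$ by an affine expression in $g(a)$ and $g(b)$, namely $g(a+b) \le A(1 + B) + B\, g(a) + B^2\, g(b)$. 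Integrating over~$(0, 1)$ and recognizing the two resulting integrals as $J(\mu, \nu)$ and $J(\nu, \rho)$ via~\eqref{eq:8} bounds $J(\mu, \rho)$ by the finite quantity $A(1 + B) + B\, J(\mu, \nu) + B^2\, J(\nu, \rho)$, which completes the proof.

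The only subtle point — and the reason the growth hypothesis~\eqref{eq:growth_rate} is imposed in this section — is the asymmetry of~$g$: without the one-sided control $g(-t) \le A + B\, g(t)$ neither symmetry nor the reduction of $g(a + b)$ in the transitivity step would go through. Once that estimate is in hand, all three verifications reduce to routine integrations, so I expect no real obstacle beyond keeping track of the constants.
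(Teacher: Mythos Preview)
Your proof is correct and follows essentially the same route as the paper's: reflexivity is immediate, symmetry comes from specializing~\eqref{eq:growth_rate} to $g(-t)\le A+Bg(t)$, and transitivity is obtained by bounding $g$ of the quantile difference via~\eqref{eq:growth_rate} and integrating. The only cosmetic difference is that the paper writes $F_\mu^{-1}-F_\nu^{-1}=(F_\mu^{-1}-F_\lambda^{-1})-(F_\nu^{-1}-F_\lambda^{-1})$ and applies~\eqref{eq:growth_rate} once to get $J(\mu,\nu)\le A+BJ(\mu,\lambda)+BJ(\nu,\lambda)$, whereas you decompose as a sum and invoke the symmetry estimate a second time, yielding the slightly larger constants $A(1+B)$, $B$, $B^2$.
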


\begin{proof}
	The relation $\sim$ is obviously reflective.
	It is symmetric because $g(-x) \le A + Bg(x)$ according to~\eqref{eq:growth_rate} and therefore
	\begin{displaymath}
		J(\nu, \mu) = \int_0^1 g(F_\nu^{-1} - F_\mu^{-1})\,\diff x \le A + BJ(\mu, \nu) < +\infty.
	\end{displaymath}
	Finally to prove that it is transitive we observe that for any measures $\lambda, \mu, \nu$ 
	\begin{multline}
		J(\mu, \nu) = \int_0^1 g(F_\mu^{-1} - F_\nu^{-1})\, \diff x
		\le \int_0^1 \bigl( A + B g(F_\mu^{-1} - F_\lambda^{-1}) \\
		+ Bg(F_\nu^{-1} - F_\lambda^{-1})\bigr)\, \diff x
		= A + BJ(\mu, \lambda) + BJ(\nu, \lambda) < +\infty.\qedhere
	\end{multline}
\end{proof}

Denote the equivalence class of~$\nu$ with respect to the relation~$\sim$ by $C(\nu) := \{\mu\in\mathcal{P}(\rset)\colon \mu \sim \nu\}$.
Observe that for any $\nu \in \mathcal{P}(\rset)$ the function $J(\cdot, \nu)$ is measurable with respect to the Borel $\sigma$-algebra generated by the topology of weak convergence in~$\mathcal{P}(\rset)$, and therefore $C(\nu)$ is measurable for any~$\nu$.
Observe also that for $g(x) = |x|^p$, $p\ge 1$, the corresponding Wasserstein space~$W_p$ coincides with $C(\delta_0)$, the equivalence class of a Dirac unit mass at the origin.

\begin{lemma}
	\emph{(1)} The barycenter $\bary(\mu_i, \lambda_i)_{1\le i\le n}$ is defined iff the measures $\mu_1, \dots,\allowbreak \mu_n$ all lie in the same equivalence class.

	\emph{(2)} If the barycenter $\bary(P_\mu)$ exists, then $\mathop{\mathrm{supp}} P_\mu$ belongs to a single equivalence class, i.e., there exists~$\nu_0 \in \mathcal{P}(\rset)$ such that $P_\mu(C(\nu_0)) = 1$. 
\end{lemma}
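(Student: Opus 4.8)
The plan is to handle the two parts separately, in each case reducing the existence of a barycenter to the finiteness condition supplied by the relevant existence theorem and then exploiting the equivalence-relation structure established above.

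For part~(1), I would invoke Theorem~\ref{thm:emp_bar}, according to which $\bary(\mu_i, \lambda_i)_{1\le i\le n}$ is defined precisely when $\sum_{1\le i\le n} \lambda_i J(\mu_i, \nu) < +\infty$ for some $\nu \in \mathcal{P}(\rset)$. Since all weights $\lambda_i$ are strictly positive and every term $J(\mu_i, \nu) \ge 0$, this sum is finite if and only if each individual term $J(\mu_i, \nu)$ is finite, i.e., $\mu_i \sim \nu$ for every~$i$. Both implications then follow immediately. If the barycenter is defined, there is a common $\nu$ with $\mu_i \sim \nu$ for all~$i$, and by symmetry and transitivity of~$\sim$ all the $\mu_i$ lie in one equivalence class. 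Conversely, if the $\mu_i$ share an equivalence class, take $\nu = \mu_1$; then $J(\mu_i, \mu_1) < +\infty$ for each~$i$, the weighted sum is finite, and Theorem~\ref{thm:emp_bar} yields the barycenter.

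For part~(2), I would argue that existence of $\bary(P_\mu)$ forces the minimal value $\ave J(\bm\mu, \nu^*)$ to be finite: by Theorem~\ref{thm:prob_bar} the barycenter $\nu^* = \bary(P_\mu)$ is well defined exactly when $\ave J(\bm\mu, \nu) < +\infty$ for some~$\nu$, in which case $\nu^*$ attains this finite minimum, so $\int_{\mathcal{P}(\rset)} J(\mu, \nu^*)\, \diff P_\mu < +\infty$. The integrand $J(\cdot, \nu^*)$ is nonnegative and, as observed just before this lemma, measurable; hence a nonnegative measurable function with finite integral must be finite $P_\mu$-almost everywhere. Thus $P_\mu(\{\mu\colon J(\mu, \nu^*) = +\infty\}) = 0$, and taking $\nu_0 = \nu^*$ gives $P_\mu(C(\nu_0)) = P_\mu(\{\mu\colon J(\mu, \nu^*) < +\infty\}) = 1$.

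Neither part presents a serious obstacle once the two existence theorems are available; the work is almost entirely bookkeeping with the relation~$\sim$. The one point deserving care is the logical link in part~(2) between the phrase \emph{the barycenter exists} and the quantitative statement $\ave J(\bm\mu, \nu^*) < +\infty$: one must exclude the degenerate situation in which $\ave J(\bm\mu, \cdot) \equiv +\infty$, where no meaningful minimizer is singled out. This is precisely ruled out by appealing to Theorem~\ref{thm:prob_bar}, whose hypothesis $\ave J(\bm\mu, \nu) < +\infty$ for some~$\nu$ is the operative meaning of existence here; after that, the almost-everywhere finiteness of a nonnegative integrable function is the only measure-theoretic fact needed.
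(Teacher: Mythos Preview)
Your proof is correct and follows essentially the same approach as the paper. Part~(1) is identical in substance; in part~(2) the paper argues by contrapositive (if no equivalence class has full $P_\mu$-measure then $\ave J(\bm\mu,\nu)=+\infty$ for every~$\nu$), whereas you argue directly by taking $\nu_0=\nu^*$ and using that a nonnegative integrable function is finite a.e.---these are two phrasings of the same observation.
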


\begin{proof}
  (1) If $\mu_1, \dots, \mu_n$ belong to different equivalence classes, then the quantity $\sum_{1\le i\le n} \lambda_i J(\mu_i, \nu)$ is infinite for any $\nu\in \mathcal{P}(\rset)$, and no barycenter exists. Conversely, if all measures are equivalent, then $\sum_{1\le i\le n} \lambda_i J(\mu_i, \nu)$ is finite e.g.\ for $\nu = \mu_1$, and a unique barycenter exists by Theorem~\ref{thm:emp_bar}.

  (2) Let $P_\mu(C(\nu)) < 1$ for all~$\nu\in \mathcal{P}(\rset)$.
  Then $P_\mu(\mathcal{P}(\rset)\setminus C(\nu)) = P_\mu(\bm\mu \notin C(\nu)) = P_\mu(J(\bm\mu, \nu) = +\infty) > 0$ and $\ave J(\bm\mu, \cdot) = +\infty$ on~$\mathcal{P}(\rset)$, which again implies there is no barycenter.
\end{proof}

Under condition~\eqref{eq:growth_rate}, convergence of Fr{\'e}chet barycenters can be characterized in terms of the transportation cost~$J$ defined in~\eqref{eq:8}.

\begin{theorem}
	\label{thm:strong_conv}
	Let $\{\bm\mu_n\}_{n \ge 1}\subset \mathcal{P}(\rset)$ be a sequence of i.i.d. random elements with distribution $P_\mu$.
	Then, if there exists Fr{\'e}chet barycenter $\nu^* := \bary(P_\mu)$, the sequence $\overline{\bm\mu}_n := \bary(\bm\mu_i, 1 / n)_{1\le i\le n}$ of empirical Fr{\'e}chet barycenters satisfies
	\[\lim_{n\to \infty} J(\overline{\bm\mu}_n, \nu^*) = \lim_{n\to \infty} J(\nu^*, \overline{\bm\mu}_n) = 0\quad \text{a.s.}.\]
\end{theorem}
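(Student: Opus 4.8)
The plan is to prove the stronger (non-symmetric) statement by showing directly that $\int_0^1 g\bigl(F^{-1}_{\overline{\bm\mu}_n}(x) - F^{-1}_{\nu^*}(x)\bigr)\,\diff x \to 0$ almost surely, and by the parallel argument the companion integral $\int_0^1 g\bigl(F^{-1}_{\nu^*}(x) - F^{-1}_{\overline{\bm\mu}_n}(x)\bigr)\,\diff x$. First I would note that since $\bary(P_\mu)$ exists, $P_\mu$ concentrates on a single equivalence class, so a.s.\ all $\bm\mu_i$ lie in one class and each $\overline{\bm\mu}_n$ is well defined by Theorem~\ref{thm:emp_bar}, and $\ave J(\bm\mu,\nu^*)<+\infty$. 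Theorem~\ref{thm:4} (equivalently Theorem~\ref{thm:3} applied to the i.i.d.\ scalars $F^{-1}_{\bm\mu_i}(x)$) gives $F^{-1}_{\overline{\bm\mu}_n}(x)\to F^{-1}_{\nu^*}(x)$ for a.e.\ $x$, a.s.; continuity of $g$ with $g(0)=0$ then makes both integrands tend to $0$ pointwise a.e. The crude bound from the triangle-type inequality only shows the $\limsup$ is finite, so the real work is to upgrade pointwise convergence to convergence of the integrals. For this I would use a \emph{generalized dominated convergence theorem}: if $0\le f_n\le h_n$, $f_n\to 0$ and $h_n\to h$ a.e.\ with $\int h_n\to\int h<\infty$, then $\int f_n\to 0$ (apply Fatou to $h_n-f_n$).

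For the dominating sequence I would use the growth condition~\eqref{eq:growth_rate} in the form $g(a-b)\le A+Bg(a)+Bg(b)$ with $a=F^{-1}_{\bm\mu_i}(x)-F^{-1}_{\nu^*}(x)$ and $b=F^{-1}_{\bm\mu_i}(x)-F^{-1}_{\overline{\bm\mu}_n}(x)$, whose difference $a-b=F^{-1}_{\overline{\bm\mu}_n}(x)-F^{-1}_{\nu^*}(x)$ does not depend on $i$. Averaging over $i$ gives
\[
  g\bigl(F^{-1}_{\overline{\bm\mu}_n}(x)-F^{-1}_{\nu^*}(x)\bigr)\le A+\frac{B}{n}\sum_{i=1}^n g\bigl(F^{-1}_{\bm\mu_i}(x)-F^{-1}_{\nu^*}(x)\bigr)+\frac{B}{n}\sum_{i=1}^n g\bigl(F^{-1}_{\bm\mu_i}(x)-F^{-1}_{\overline{\bm\mu}_n}(x)\bigr)=:h_n(x).
\]
Since $h_n$ is symmetric in the two differences, it also dominates $g\bigl(F^{-1}_{\nu^*}(x)-F^{-1}_{\overline{\bm\mu}_n}(x)\bigr)$, so a single argument settles both limits in the theorem.

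It then remains to show $h_n\to h$ a.e.\ and $\int_0^1 h_n\,\diff x\to\int_0^1 h\,\diff x$, where $h(x)=A+2B\,\eta(x)$ with $\eta(x):=\ave g\bigl(F^{-1}_{\bm\mu}(x)-F^{-1}_{\nu^*}(x)\bigr)$. For the middle average, the strong law of large numbers applied pointwise in $x$ gives convergence to $B\eta(x)$ for a.e.\ $x$ (a Fubini argument, relying on the joint measurability from Lemma~\ref{lm:1}, converts ``for each $x$, a.s.''\ into ``a.s., for a.e.\ $x$''), while its integral $\tfrac{B}{n}\sum_i J(\bm\mu_i,\nu^*)$ converges to $B\,\ave J(\bm\mu,\nu^*)=B\int_0^1\eta$ by the ordinary SLLN. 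For the last average I would use that, pointwise in $x$, the convex functions $\tfrac1n\sum_i g(F^{-1}_{\bm\mu_i}(x)-\cdot)$ converge pointwise to $\ave g(F^{-1}_{\bm\mu}(x)-\cdot)$; convexity upgrades this to uniform convergence on compacta, and combined with $F^{-1}_{\overline{\bm\mu}_n}(x)\to F^{-1}_{\nu^*}(x)$ it forces the minimal values $\tfrac1n\sum_i g\bigl(F^{-1}_{\bm\mu_i}(x)-F^{-1}_{\overline{\bm\mu}_n}(x)\bigr)$ to converge to $\eta(x)$ for a.e.\ $x$. Hence $h_n\to h$ a.e.

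\textbf{Main obstacle.} The crux is the convergence of the integral of the last average, i.e.\ $\tfrac1n\sum_i J(\bm\mu_i,\overline{\bm\mu}_n)\to\int_0^1\eta$. The minimizing property of the empirical barycenter supplies the upper bound $\tfrac1n\sum_i J(\bm\mu_i,\overline{\bm\mu}_n)\le\tfrac1n\sum_i J(\bm\mu_i,\nu^*)\to\int_0^1\eta$, whence $\limsup\le\int_0^1\eta$; the matching bound $\liminf\ge\int_0^1\eta$ I would obtain from Fatou's lemma applied to this nonnegative average using its a.e.\ limit $\eta$. Combining the two yields $\int h_n\to\int h$, after which the generalized dominated convergence theorem closes both assertions at once. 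The only remaining care is measure-theoretic bookkeeping — the Fubini exchange of the $x$- and $\omega$-null sets — which is routine but must be arranged so that all the a.e.\ and a.s.\ statements hold on one common probability-one event before the dominated convergence argument is run.
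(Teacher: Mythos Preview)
Your argument is correct, but the route differs from the paper's. The paper does not use a generalized dominated convergence theorem; instead it splits $(0,1)$ into a middle piece $[1/k,1-1/k]$ and two tails. On the middle piece it invokes the a.e.\ convergence of quantiles (Theorem~\ref{thm:4}) together with the eventual boundedness of $F^{-1}_{\overline{\bm\mu}_n}$ on that compact subinterval. On the tails it uses exactly your growth bound, but then exploits only the \emph{pointwise} one-sided inequality $\tfrac1n\sum_i g\bigl(F^{-1}_{\bm\mu_i}(x)-F^{-1}_{\overline{\bm\mu}_n}(x)\bigr)\le\tfrac1n\sum_i g\bigl(F^{-1}_{\bm\mu_i}(x)-\psi(x)\bigr)$ (valid because the empirical barycenter's quantile minimizes pointwise) and the SLLN for the scalar variables $\int_0^{1/k} g(F^{-1}_{\bm\mu_i}-\psi)\,\diff x$; then it lets $n\to\infty$ and afterwards $k\to\infty$. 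In particular the paper never needs the convergence of the minimum \emph{values} $\tfrac1n\sum_i g\bigl(F^{-1}_{\bm\mu_i}(x)-F^{-1}_{\overline{\bm\mu}_n}(x)\bigr)\to\eta(x)$, only the trivial upper bound, so its argument is slightly more elementary. Your approach is more unified: one dominating sequence $h_n$ handles the whole interval and, as you note, both orientations $J(\overline{\bm\mu}_n,\nu^*)$ and $J(\nu^*,\overline{\bm\mu}_n)$ simultaneously; the extra ingredient you pay for this is the locally-uniform-convergence-of-convex-functions step and the Fatou/minimality sandwich to match $\int h_n$ with $\int h$.
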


\begin{proof}
	Consider the function
	\[\psi(x) := \arg\min_{y\in \rset} \ave g(F_{\bm\mu}^{-1}(x) - y).\]
	According to Theorem~\ref{thm:prob_bar}, $F_{\nu^*}^{-1}(x) = \psi(x)$ a.e. and
	\[\ave J(\bm\mu, \nu^*) = \int_0^1 \ave g(F_{\bm\mu}^{-1} - \psi)\, \diff x < +\infty.\]
	It follows that 
	\[\lim_{k\to \infty} \int_0^{\frac1k} \ave g(F_{\bm\mu}^{-1} - \psi)\, \diff x = \lim_{k\to \infty} \int_{1 - \frac1k}^1 \ave g(F_{\bm\mu}^{-1} - \psi)\, \diff x = 0.\]
		
	For the random variables $\int_0^{\frac1k} g(F_{\bm\mu_i}^{-1} - \psi)\, \diff x$, $k\ge 1$, the strong law of large numbers implies that
	\[\frac1n \sum_{i=1}^n \int_0^{\frac1k} g(F_{\bm\mu_i}^{-1} - \psi)\, \diff x
	\xrightarrow[n\to\infty]{} \ave \int_0^{\frac1k} g(F_{\bm\mu}^{-1} - \psi)\, \diff x < +\infty\ \text{a.s.}\]
	Using the bound~\eqref{eq:growth_rate} on~$g(\cdot)$ and the construction of the empirical Fr{\'e}chet barycenter (Theorem~\ref{thm:emp_bar}), we get
	\begin{multline}
		\label{eq:lim_1}
		\int_0^{\frac1k} g(F_{\overline{\bm\mu}_n}^{-1} - \psi)\, \diff x
		\le \frac{A}k + B \int_0^{\frac1k} \frac1n \sum_{i=1}^n \bigl(g(F_{\bm\mu_i}^{-1} - F_{\overline{\bm\mu}_n}^{-1}) + g(F_{\bm\mu_i}^{-1} - \psi)\bigr)\, \diff x\le {}\\
		{}\le \frac{A}k + \frac{2 B}n \sum_{i=1}^n \int_0^{\frac1k}
		g(F_{\bm\mu_i}^{-1} - \psi)\, \diff x\\
		\xrightarrow[n\to \infty]{} \frac{A}k + 2 B \ave \int_0^{\frac1k} g(F_{\bm\mu}^{-1} - \psi)\, \diff x
		\xrightarrow[k\to\infty]{} 0.
	\end{multline}
	Similarly one can prove that 
	\begin{equation}
		\label{eq:lim_2}
		\int_{1 - \frac1k}^1 g(F_{\overline{\bm\mu}_n}^{-1} - \psi)\, \diff x\xrightarrow[n\to \infty]{} \frac{A}k + 2 B \ave \int_{1 - \frac1k}^1 g(F_{\bm\mu}^{-1} - \psi)\, \diff x\xrightarrow[k\to\infty]{} 0\ \text{a.s.}
	\end{equation}
	But by Theorem~\ref{thm:prob_bar} $\overline{\bm\mu}_n$ weakly converges to~$\nu^*$ a.s., hence for all $k\ge 1$
	\begin{equation}
		\label{eq:lim_3}
		\int_{\frac1k}^{1 - \frac1k} g(F_{\overline{\bm\mu}_n}^{-1} - \psi)\, \diff x
		\xrightarrow[n\to\infty]{} \int_{\frac1k}^{1 - \frac1k} g(F_{\nu^*}^{-1} - \psi)\, \diff x = 0\ \text{a.s.}
	\end{equation}
	Now it follows from formulas~\eqref{eq:lim_1},~\eqref{eq:lim_2} and~\eqref{eq:lim_3} that 
	\[J(\overline{\bm\mu}_n, \nu^*) = \int_0^1 g(F_{\overline{\bm\mu}_n}^{-1} - \psi)\, \diff x
	\xrightarrow[n\to\infty]{} J(\nu^*, \nu^*) = 0\ \text{a.s.}\]		
	
	In the same way, one can obtain that a.s.
	\[J(\nu^*, \bm\nu_n)\xrightarrow[n\to\infty]{} J(\nu^*, \nu^*) = 0.\qedhere\]
\end{proof}

It remains to clarify the relation between the convergence defined in terms of the transportation cost ($\mu_n \to \mu$ iff $J(\mu_n, \mu) \to 0$), which is used in the preceding theorem, and the weak convergence of measures $\mu_n \rightharpoonup \mu$.

\begin{lemma}
	\label{lem:Jweak}
	Let the sequence $\{\mu_n\}_{n\ge 1}$ be such that $J(\mu_n, \mu^*)\to 0$ for some $\mu^*\in \mathcal{P}(\rset)$; then $\mu_n \rightharpoonup \mu^*$.
\end{lemma}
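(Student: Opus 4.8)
The plan is to reduce the weak convergence to pointwise convergence of quantile functions and then invoke Lemma~\ref{lem:weak_conv}. Writing $h_n := F^{-1}_{\mu_n} - F^{-1}_{\mu^*}$, the hypothesis together with the representation~\eqref{eq:8} gives $\int_0^1 g(h_n)\, \diff x = J(\mu_n, \mu^*) \to 0$, i.e.\ $g(h_n) \to 0$ in $L^1((0,1))$. The first step is to upgrade this to convergence of $h_n$ to~$0$ in Lebesgue measure. For this I would use that $g$ is continuous, strictly convex and coercive with unique minimum $g(0) = 0$: for every $\epsilon > 0$ the quantity $\delta(\epsilon) := \inf_{|t| \ge \epsilon} g(t) = \min\{g(\epsilon), g(-\epsilon)\}$ is then strictly positive, the infimum being attained at the endpoints by convexity and unimodality of~$g$. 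Since $\{|h_n| > \epsilon\} \subseteq \{g(h_n) \ge \delta(\epsilon)\}$, Chebyshev's inequality yields $\mathrm{Leb}\{|h_n| > \epsilon\} \le \delta(\epsilon)^{-1} \int_0^1 g(h_n)\, \diff x \to 0$, so $h_n \to 0$ in measure.

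The second and \emph{crucial} step is to convert convergence in measure into genuine pointwise convergence at every continuity point $x_0$ of $F^{-1}_{\mu^*}$, exploiting that all the $F^{-1}_{\mu_n}$ are nondecreasing. Suppose $F^{-1}_{\mu_n}(x_0) \not\to F^{-1}_{\mu^*}(x_0)$; after passing to a subsequence we may assume $F^{-1}_{\mu_n}(x_0) \ge F^{-1}_{\mu^*}(x_0) + \epsilon$ for all $n$ (the reverse inequality being handled symmetrically by looking to the left of~$x_0$). Monotonicity then forces $F^{-1}_{\mu_n}(x) \ge F^{-1}_{\mu^*}(x_0) + \epsilon$ for all $x > x_0$, while continuity of $F^{-1}_{\mu^*}$ at~$x_0$ lets me choose $\delta > 0$ with $F^{-1}_{\mu^*}(x) < F^{-1}_{\mu^*}(x_0) + \epsilon/2$ on $(x_0, x_0 + \delta)$. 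Hence $h_n(x) > \epsilon/2$ throughout $(x_0, x_0 + \delta)$, so $\mathrm{Leb}\{|h_n| > \epsilon/2\} \ge \delta$ along the subsequence, contradicting convergence in measure.

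Since a nondecreasing function has at most countably many discontinuities, the continuity points of $F^{-1}_{\mu^*}$ form a co-countable, hence dense, subset $M \subset (0,1)$, and on $M$ we have shown $F^{-1}_{\mu_n}(x) \to F^{-1}_{\mu^*}(x)$. Lemma~\ref{lem:weak_conv} then delivers $\mu_n \rightharpoonup \mu^*$, completing the proof. The step I expect to require the most care is the second one: plain $L^1$ (or in-measure) convergence does \emph{not} imply pointwise convergence for general sequences, and it is precisely the monotonicity of the quantile functions that rescues the argument. One must treat the left and right one-sided cases separately and restrict attention to continuity points of the limit, since a jump of $F^{-1}_{\mu^*}$ could otherwise obstruct pointwise convergence there.
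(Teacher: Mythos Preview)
Your proof is correct and follows essentially the same route as the paper's: both arguments exploit monotonicity of the quantile functions together with continuity of $F^{-1}_{\mu^*}$ at~$x_0$ to produce an interval of fixed length on which $|F^{-1}_{\mu_n}-F^{-1}_{\mu^*}|$ stays bounded below, and then use strict positivity of~$g$ away from~$0$ to contradict $J(\mu_n,\mu^*)\to 0$. The only difference is organizational: the paper goes directly from the interval estimate to the lower bound $J(\mu_n,\mu^*)\ge g(\epsilon)\delta$, whereas you factor the argument through ``$g(h_n)\to 0$ in $L^1$ $\Rightarrow$ $h_n\to 0$ in measure'' via Chebyshev before deriving the same contradiction---a slightly longer but equally valid path.
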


\begin{proof}
	Suppose on the contrary that the weak convergence of~$\mu_n$ to~$\mu^*$ does not hold.
	Then there exists a point $0 < x_0 < 1$ where $F^{-1}_{\mu^*}$ is continuous but the sequence $F^{-1}_{\mu_n}(x_0)$ does not converge to~$F^{-1}_{\mu^*}(x_0)$.
	Assume specifically that there exists $\epsilon > 0$ such that $F^{-1}_{\mu_n}(x_0) \ge F^{-1}_{\mu^*}(x_0) + 2\epsilon$ for a suitable subsequence, which we still denote $\mu_n$.
	Monotonicity of $F^{-1}_{\mu_n}$ and continuity of~$F^{-1}_{\mu^*}$ at~$x_0$ imply that $F^{-1}_{\mu_n}(x) \ge F^{-1}_{\mu^*}(x) + \epsilon$ for $x_0 < x < x_0 + \delta$ with some $\delta > 0$.
	Then
	\begin{displaymath}
		J(\mu_n, \mu^*) \ge \int_{x_0}^{x_0 + \delta} g(F^{-1}_{\mu_n} - F^{-1}_{\mu^*})\, \diff x
		\ge \int_{x_0}^{x_0 + \delta} g(\epsilon)\, \diff x = g(\epsilon) \delta > 0,
	\end{displaymath}
	which contradicts the assumption $J(\mu_n, \mu^*) \to 0$.
\end{proof}

\begin{theorem}
	\label{thm:strongweak}
	For a sequence of measures $\{\mu_n\}_{n\ge 1}$ and a measure~$\mu^*$ the following conditions are equivalent:
	\begin{enumerate}
		\item $\mu_n \rightharpoonup \mu^*$ and $J(\mu_n, \nu) \to J(\mu^*, \nu)$ for all $\nu \in \mathcal{P}(\rset)$;
		\item $\mu_n \rightharpoonup \mu^*$ and there exists a measure $\nu_0$ such that $J(\mu_n, \nu_0) \to J(\mu^*, \nu_0) < +\infty$;
		\item $J(\mu_n, \mu^*) \to 0$.
	\end{enumerate}
\end{theorem}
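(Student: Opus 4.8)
The plan is to prove the three conditions equivalent through the cycle of implications $(3)\Rightarrow(1)\Rightarrow(2)\Rightarrow(3)$. Throughout I abbreviate $a_n := F^{-1}_{\mu_n}$, $a := F^{-1}_{\mu^*}$, and for a fixed $\nu$ write $c := F^{-1}_\nu$, so that by~\eqref{eq:8} one has $J(\mu_n,\nu)=\int_0^1 g(a_n-c)\,\diff x$ and likewise for $\mu^*$. The two substantial implications are $(3)\Rightarrow(1)$ and $(2)\Rightarrow(3)$; the middle implication $(1)\Rightarrow(2)$ is immediate, since taking $\nu_0=\mu^*$ makes $J(\mu^*,\nu_0)=J(\mu^*,\mu^*)=0<+\infty$, and condition $(1)$ specializes to $J(\mu_n,\mu^*)\to J(\mu^*,\mu^*)=0$ while the weak convergence is already part of $(1)$.

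For $(3)\Rightarrow(1)$ I would first invoke Lemma~\ref{lem:Jweak} to obtain $\mu_n\rightharpoonup\mu^*$ from $J(\mu_n,\mu^*)\to0$. Weak convergence then yields $a_n(x)\to a(x)$ for a.e.\ $x\in(0,1)$ (convergence of quantile functions at the continuity points of the monotone limit, exactly as used in the proof of Theorem~\ref{thm:continuity}), and hence $g(a_n-c)\to g(a-c)$ a.e.\ by continuity of $g$. I would then split on whether $J(\mu^*,\nu)$ is finite. If $J(\mu^*,\nu)=+\infty$, Fatou's lemma applied to the nonnegative integrands gives $\liminf_n J(\mu_n,\nu)\ge\int_0^1 g(a-c)\,\diff x=+\infty$, so $J(\mu_n,\nu)\to+\infty=J(\mu^*,\nu)$. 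If $J(\mu^*,\nu)<+\infty$, I would apply the growth bound~\eqref{eq:growth_rate} to the decomposition $a_n-c=(a_n-a)-(c-a)$, obtaining
\[ g(a_n-c)\le A + B\,g(a_n-a) + B\,g(c-a), \]
whose right-hand side is a sequence of dominating functions converging in $L^1(0,1)$: indeed $\int_0^1 g(a_n-a)\,\diff x=J(\mu_n,\mu^*)\to0$ by hypothesis, while $\int_0^1 g(c-a)\,\diff x=J(\nu,\mu^*)<+\infty$ (finite by~\eqref{eq:growth_rate} since $J(\mu^*,\nu)<+\infty$). Combining the a.e.\ convergence of the integrands with the $L^1$-convergence of these dominating functions, the generalized dominated convergence theorem (in which the dominating functions vary but converge in $L^1$) yields $J(\mu_n,\nu)\to J(\mu^*,\nu)$, completing $(3)\Rightarrow(1)$.

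The implication $(2)\Rightarrow(3)$ runs along the same lines but is cleaner. From the weak convergence assumed in $(2)$ I again get $a_n\to a$ a.e., whence $g(a_n-a)\to g(0)=0$ a.e. Writing $c_0:=F^{-1}_{\nu_0}$ and applying~\eqref{eq:growth_rate} to $a_n-a=(a_n-c_0)-(a-c_0)$ gives
\[ g(a_n-a)\le A + B\,g(a_n-c_0) + B\,g(a-c_0), \]
and here the dominating functions converge in $L^1(0,1)$ because $\int_0^1 g(a_n-c_0)\,\diff x=J(\mu_n,\nu_0)\to J(\mu^*,\nu_0)<+\infty$ and $\int_0^1 g(a-c_0)\,\diff x=J(\mu^*,\nu_0)<+\infty$ by the hypothesis of $(2)$. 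Applying the generalized dominated convergence theorem once more gives $J(\mu_n,\mu^*)=\int_0^1 g(a_n-a)\,\diff x\to0$, which is exactly $(3)$.

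I expect the main obstacle to be the correct invocation of the generalized dominated convergence theorem: unlike the classical version, the dominating functions here are not fixed but form an $L^1$-convergent sequence, so one must carefully verify both the a.e.\ convergence of the integrands and the convergence of the integrals of the dominating functions (the latter reducing to the assumed convergence of the transportation costs $J(\mu_n,\mu^*)$ or $J(\mu_n,\nu_0)$). A secondary point needing separate handling is the case $J(\mu^*,\nu)=+\infty$ in $(3)\Rightarrow(1)$, where domination is unavailable and one must instead use Fatou's lemma to force $J(\mu_n,\nu)\to+\infty$.
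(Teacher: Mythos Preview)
Your argument is correct and complete; it just takes a different route from the paper. The paper proves the same cycle $(1)\Rightarrow(2)\Rightarrow(3)\Rightarrow(1)$ but, instead of invoking the generalized dominated convergence theorem, it splits each integral as $\int_0^\epsilon+\int_\epsilon^{1-\epsilon}+\int_{1-\epsilon}^1$: on the middle piece the quantile functions are uniformly bounded (by monotonicity and the a.e.\ convergence $a_n\to a$), so bounded convergence handles $\int_\epsilon^{1-\epsilon}$, while the growth bound~\eqref{eq:growth_rate} is used only on the two tail pieces, whose contributions are then sent to~$0$ by letting $\epsilon\to0$. Your approach applies~\eqref{eq:growth_rate} globally and lets the generalized DCT absorb both the interior and the tails in one stroke; this is shorter and more conceptual, at the cost of citing a slightly less elementary convergence theorem. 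A further point in your favor is that you treat the case $J(\mu^*,\nu)=+\infty$ in $(3)\Rightarrow(1)$ explicitly via Fatou, whereas the paper's tail estimate tacitly assumes $\int_0^\epsilon g(a-c)\,\diff x\to0$, i.e.\ $J(\mu^*,\nu)<+\infty$.
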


\begin{proof}
	Obviously (1) implies~(2). To show that (2) implies~(3), observe that for any $\epsilon > 0$ we have
	\begin{displaymath}
		\int_\epsilon^{1 - \epsilon} g(F^{-1}_{\mu_n} - F^{-1}_\nu)\, \diff x
		\to \int_\epsilon^{1 - \epsilon} g(F^{-1}_{\mu^*} - F^{-1}_\nu)\, \diff x
	\end{displaymath}
	due to the weak convergence of measures.
	Since
	\begin{multline}
		J(\mu_n, \nu) = \Bigl(\int_0^\epsilon + \int_\epsilon^{1 - \epsilon} + \int_{1 - \epsilon}^1\Bigr)\, g(F^{-1}_{\mu_n} - F^{-1}_\nu)\, \diff x\\
		\xrightarrow[n\to\infty]{} J(\mu^*, \nu) = \Bigl(\int_0^\epsilon + \int_\epsilon^{1 - \epsilon} + \int_{1 - \epsilon}^1\Bigr)\, g(F^{-1}_{\mu^*} - F^{-1}_\nu)\, \diff x,
	\end{multline}
	we have
	\begin{displaymath}
		\Bigl(\int_0^\epsilon + \int_{1 - \epsilon}^1\Bigr)\, g(F^{-1}_{\mu_n} - g^{-1}_\nu)\, \diff x 
		\xrightarrow[n\to\infty]{} \Bigl(\int_0^\epsilon + \int_{1 - \epsilon}^1\Bigr)\, g(F^{-1}_{\mu^*} - g^{-1}_\nu)\, \diff x,
	\end{displaymath}
	where the right-hand side vanishes as~$\epsilon \to 0$ because the integral $\int_\epsilon^{1 - \epsilon} g(F^{-1}_{\mu^*} - F^{-1}_\nu)\, \diff x$ converges to~$J(\mu^*, \nu)$.
	Thus
	\begin{multline}
		\Bigl(\int_0^\epsilon + \int_{1 - \epsilon}^1\Bigr)\, g(F^{-1}_{\mu_n} - F^{-1}_{\mu^*})\, \diff x \\
		\le \Bigl(\int_0^\epsilon + \int_{1 - \epsilon}^1\Bigr)\, (A + B g(F^{-1}_{\mu_n} - F^{-1}_\nu)
		+ B g(F^{-1}_{\mu^*} - F^{-1}_\nu))\, \diff x \\
		\xrightarrow[n\to\infty]{} 2A\epsilon + 2B\Bigl(\int_0^\epsilon + \int_{1 - \epsilon}^1\Bigr)\,
		g(F^{-1}_{\mu^*} - F^{-1}_\nu)\, \diff x \xrightarrow[\epsilon \to 0]{} 0.
	\end{multline}
	Using the weak convergence again we observe that
	\begin{displaymath}
		\int_\epsilon^{1 - \epsilon} g(F^{-1}_{\mu_n} - F^{-1}_{\mu^*})\, \diff x \to 0
	\end{displaymath}
	for any $\epsilon > 0$.
	Thus
	\begin{displaymath}
		J(\mu_n, \mu) = \int_0^1 g(F^{-1}_{\mu_n} - F^{-1}_{\mu^*})\, \diff x \to 0.
	\end{displaymath}
	
	It remains to prove that (3) implies~(1).
	Fix a measure $\nu \in \mathcal{P}(\rset)$.
	By Lemma~\ref{lem:Jweak} convergence $J(\mu_n, \mu^*) \to 0$ implies weak convergence $\mu_n \rightharpoonup \mu^*$.
	Consider
	\begin{multline}
		\int_0^\epsilon g(F^{-1}_{\mu_n} - F^{-1}_\nu)\, \diff x
		\le \int_0^\epsilon (A + Bg(F^{-1}_{\mu_n} - F^{-1}_{\mu^*}) + Bg(F^{-1}_\nu - F^{-1}_{\mu^*})]\, \diff x \\
		\le \int_0^\epsilon (A + Bg(F^{-1}_{\mu_n} - F^{-1}_{\mu^*}) + AB + B^2 g(F^{-1}_{\mu^*} - F^{-1}_\nu))\, \diff x \\
		\xrightarrow[n\to\infty]{} A(1 + B)\epsilon + B^2 \int_0^\epsilon g(F^{-1}_{\mu^*} - F^{-1}_\nu)\, \diff x
		\xrightarrow[\epsilon\to 0]{} 0.
	\end{multline}
	A similar result holds for~$\int_{1 - \epsilon}^1 g(F^{-1}_{\mu_n} - F^{-1}_\nu)\, \diff x$.
	Now the weak convergence $\mu_n \rightharpoonup \mu^*$ implies that
	\begin{displaymath}
		\int_\epsilon^{1 - \epsilon} g(F^{-1}_{\mu_n} - F^{-1}_\nu)\, \diff x
		\xrightarrow[n\to\infty]{} \int_\epsilon^{1 - \epsilon} g(F^{-1}_{\mu^*} - F^{-1}_\nu)\, \diff x
		\xrightarrow[\epsilon\to 0]{} J(\mu^*, \nu).
	\end{displaymath}
	Gathering the results for $\int_0^\epsilon$, $\int_\epsilon^{1 - \epsilon}$, and~$\int_{1 - \epsilon}^1$, we obtain $J(\mu_n, \nu) \to J(\mu^*, \nu)$.
\end{proof}

\begin{remark}
	The arguments of $J(\cdot, \cdot)$ can be simultaneously swapped in each of the conditions (1)--(3) without violating the theorem.
	In particular $J(\mu_n, \mu^*) \to 0$ is equivalent to $J(\mu^*, \mu_n) \to 0$. 
\end{remark}

Now we fix some $\nu_0 \in \mathcal{P}(\rset)$. By Theorem~\ref{thm:strongweak}, the convergence $J(\mu_n, \mu^*) \to 0$ implies that $J(\mu_n, \nu) \to J(\mu^*, \nu)$ for all~$\nu \in C(\nu_0)$.
This enables us to define on~$C(\nu_0)$ the following topology, which is at least as strong as the topology of weak convergence.

\begin{definition}
  The balls $B_r(\nu) := \{\mu\in C(\nu_0)\colon J(\mu, \nu) < r\}$ form a basis of a \emph{topology $\tau_J$} on~$C(\nu_0)$.
\end{definition}

In particular, Theorem~\ref{thm:strong_conv} states that empirical Fr{\'e}chet barycenters $\bary(\bm\mu_i, 1 / n)$ convergence to $\bary(P_\mu)$ in topology $\tau_J$ a.s.

\appendix

\section*{Appendix: Proof of the measurability lemma}
\label{sec:appendix}

\begin{proof}[Proof of Lemma~\ref{lm:1}]
	We have to show that for any $a \in\rset$ the set $U_a := K^{-1}((a, +\infty))$ is $\mathcal{B}(\mathcal{P}(\rset)) \otimes \mathcal{B}((0, 1))$-measurable.
	Fix some $x\in\rset$ and consider the function
	\begin{equation}
		\label{eq:19}
		k_x(\mu) := K(\mu, x)= F^{-1}_\mu(x)
	\end{equation}
	on~$\mathcal{P}(\rset)$; we will show first that $k^{-1}_x((a, +\infty)) \subset \mathcal{P}(\rset)$ is open.

	Take an arbitrary measure $\bar\mu \in U_a$ and denote $y := k_x(\bar\mu) = F^{-1}_{\bar\mu}(x) > a$.
	It suffices to show that there exists an open neighbourhood~$V$ of~$\bar\mu$ in the weak topology of~$\mathcal{P}(\rset)$ such that $V \subset k^{-1}_x((a, +\infty))$.

	The left continuity of~$F_{\bar\mu}$ ensures that $F_{\bar\mu}(\frac{a + y}2) < x - \delta$ for some $\delta > 0$.
	Take a continuous function $v\colon \rset \to [0, 1]$ such that $v(t) = 1$ for $t \le a$ and $v(t) = 0$ for $t \ge (a + y)/2$.
	It follows that
	\begin{equation}
		\label{eq:20}
		\int_\rset v\,\diff\bar\mu \le \int_{-\infty}^{\frac{a + y}2}\, \diff\bar\mu
		= F_{\bar\mu}\Bigl(\frac{a + y}2\Bigr) < x - \delta.
	\end{equation}
	On the other hand, for any $\mu \in \mathcal{P}(\rset)$ such that $F_\mu(a + 0) \ge x$ we have
	\begin{equation}
		\label{eq:21}
		\int_\rset v\,\diff\mu \ge \int_{-\infty}^a\, \diff\mu = F_\mu(a + 0) \ge x,
	\end{equation}
	which implies $\int_\rset v\,\diff\mu - \int_\rset v\,\diff\bar\mu > \delta$.
	Therefore for all measures~$\nu$ in the weak neighborhood
	\begin{equation}
		\label{eq:22}
		V_\delta(\bar\mu) = \Bigl\{\nu\in \mathcal{P}(\rset)\colon
		\Bigl|\int_\rset v\,\diff\nu - \int_\rset v\, \diff\bar\mu\Bigr| < \delta\Bigr\}.
	\end{equation}
	it follows that $F_\nu(a + 0) < x$, or $k_x(\nu) > a$.
	Thus the set $k^{-1}_x((a, +\infty))$ is open in the weak topology of~$\mathcal{P}(\rset)$.

	From monotonicity of the inverse cumulative distribution function we obtain that $A \times [x, 1) \subset U_a$ whenever $A\times \{x\} \in U_a$.
	The left continuity of~$F_\mu$ implies that for any $(\mu, x) \in U_a$ there exists a rational $s \in (0, x]$ such that $F^{-1}_\mu(s) > a$, whence $(\mu, s) \in U_a$.
	Therefore
	\begin{equation}
		\label{eq:23}
		U_a = \cup_{x\in (0, 1)} k^{-1}_x((a, +\infty)) \times \{x\}
		= \cup_{s\in (0, 1) \cap \mathbb{Q}} k^{-1}_s((a, +\infty)) \times [s, 1).
	\end{equation}
	Thus $U_a$ is measurable because it is a countable union of measurable sets, and $K(\cdot, \cdot)$ is measurable.
\end{proof}

\end{document}